\documentclass[12pt]{article}
\usepackage{amsmath}
\usepackage{amsthm}
\usepackage{amssymb}
 

\numberwithin{equation}{section}
\newcommand{\dcb}{\begin{array}{lll}}
\newcommand{\dce}{\end{array}}

\newtheorem{theo}{Theorem}[section]
\newtheorem{prop}[theo]{Proposition}

\newtheorem{rem}[theo]{Remark}

\theoremstyle{definition}

\newtheorem{ass}[theo]{Assumption}

\def \E{I\!\!E}
\def \P{I\!\!P}

\def \RR{I\!\!R}

\def \FF{I\!\!F}
\def \GG{C\!\!\!\!G}

\def \RR{I\!\!R}
\def \ZZ{Z\!\!\!Z}
\def \FF{I\!\!F}

\def \GG{C\!\!\!\!G}

\def \ge{\varepsilon}

\newlength{\breite}
\settowidth{\breite}{Web: http://www.mathematik.uni-mainz.de/Stochastik/Loecherbache}
\title{Polynomial deviation bounds for recurrent Harris processes having general state space}

\author{Eva 
{\sc L\"ocherbach}\footnote{CNRS UMR 8088, D\'epartement de Math\'ematiques, Universit\'e de Cergy-Pontoise,
95 000 CERGY-PONTOISE,  France. E-mail: {\tt 
eva.loecherbach@u-cergy.fr}} and Dasha {\sc Loukianova}\footnote{D\'epartement de Math\'ematiques, Universit\'e d'Evry-Val d'Essonne, Bd Fran\c{c}ois Mitterrand, 91025 Evry Cedex, France. E-mail: {\tt 
 dasha.loukianova@univ-evry.fr
} }}
\begin{document}
\maketitle
\def\abstractname{Abstract}
\begin{abstract}
Consider a strong Markov process in continuous time, taking values in some Polish state space. Recently, Douc, Fort and Guillin (2009) introduced verifiable conditions in terms of a supermartingale property  implying an explicit control of modulated moments of hitting times. We show how this control can be translated into a control of polynomial moments of abstract regeneration times 
which are obtained by using the regeneration method of Nummelin, extended to the time-continuous context. 

As a consequence, if a $p-$th moment of the regeneration times exists, we obtain non asymptotic deviation bounds of the form 
$$P_{\nu}\left (\left|\frac1t\int_0^tf(X_s)ds-\mu(f)\right|\geq\ge\right)\leq K(p)\frac1{t^{p- 1}}\frac 1{\ge^{2(p-1)}}\|f\|_\infty^{2(p-1)} , p \geq 2. $$
Here, $f$ is a bounded function and $\mu$ is the invariant measure of the process. 
We give several examples, including elliptic stochastic differential equations and stochastic differential equations driven by a jump noise. 
\end{abstract}

{\it Key words} : Harris recurrence, polynomial ergodicity, Nummelin splitting, continuous
time Markov processes, drift condition, modulated moment. 
\\

{\it MSC 2000}  : 60 J 55, 60 J 35, 60 F 10, 62 M 05

\section{Introduction}
Let $X$ be a positive Harris recurrent strong Markov process in continuous time, having invariant 
probability measure $\mu .$  From the Ergodic Theorem we know that for all $x\in\RR $, 
$f\in L^1(\mu)$ and $\ge>0$ 
 \begin{equation}\label{ergthm}
  P_x\left (\left|\frac1t\int_0^tf(X_s)ds-\mu(f)\right|\geq\ge\right)\to 0
  \end{equation}
  as $t$ goes to infinity. The purpose of this paper is to 
  establish the rate of convergence in (\ref{ergthm}),
  for bounded functions $f.$ In the existing literature, mainly the case of exponential rate of convergence (exponential 
ergodicity) has been considered. But recently, there has been growing interest in studying other possible rates such as sub-geometric or
polynomial rates. We follow this direction and study in this paper the case when the rate of convergence 
in (\ref{ergthm}) is polynomial. More precisely we use the so-called regeneration method and show that if a certain regeneration time admits a $p-$th moment, then we obtain
non asymptotic deviation bounds of the form 
\begin{equation}\label{erginequality}
P_{x }\left (\left|\frac1t\int_0^tf(X_s)ds-\mu(f)\right|\geq\ge\right)\leq K(p,x )\frac1{t^{p- 1}}\frac 1{\ge^{2(p-1)}}\|f\|_\infty^{2(p-1)}, p \geq 2.
\end{equation}
Here, $f$ is a bounded function and $\mu$ is the invariant measure of the process. Such a bound  is of major importance for many applications, for example non asymptotic problems for statistics of diffusions, concentration for particular approximations of granular media equations, and many other examples.

Let us give some comments on the history of the problem and compare our result with known results on deviation inequalities for Markov processes. In the context of Markov chains, Cl\'emen\c{c}on (2001) and Bertail and Cl\'emen\c{c}on (2009) 
have obtained bounds in (\ref{ergthm}) which are exponential in time, using the regeneration method of Nummelin. They work under the conditions of geometric (exponential) ergodicity and stationarity, and within the space of bounded functions. Our work is close to this in spirit, since we use the regeneration method as well (however, we use it in a more complicated framework since we work in continuous time). Compared to their work we do not need to assume
stationarity, our results hold for any starting point $x$ or any starting measure provided it integrates the $p-$th moment of the regeneration time. Moreover, we weaken the assumption of exponential ergodicity to polynomial ergodicity. Still in the discrete framework of Markov chains let us also mention the work by Adamczak (2009) who derives, using completely different techniques, concentration inequalities for empirical processes of Markov chains, in the regime of exponential ergodicity. Finally, Chazottes, Collet, K\"ulske and Redig (2007) obtain concentration inequalities for finite valued random fields on $\ZZ^d $  via coupling both in the exponential and the sub-exponential regime. For their purposes, the finiteness of the phase space is crucial. 

All above mentioned results hold either in discrete time or in discrete state space, and this is not what we are interested in. In this paper we concentrate on the framework of continuous time and general state space. For continuous time Markov processes there is a huge literature on the subject, and most of the results are based on functional inequalities and/or perturbation techniques which are used to obtain non-asymptotic bounds in (\ref{ergthm}). As a matter of fact, in contrary to our approach, most of these papers deal with the stationary case only or with the case when the initial law of the process
is absolutely continuous with respect to the invariant measure, having a square integrable density. Wu (2000) uses the Lumer-Phillips theorem in order to derive non-asymptotic deviation bounds which are expressed in terms of a large deviations rate function. He works under the assumption that the initial law of the process is absolutely continuous with
respect to the invariant measure. Based on this, Cattiaux and Guillin (2008) use functional inequalities like the Poincar\'e inequality in order to derive an exponential deviation bound; they work under the assumption of a spectral gap and with bounded functions. A small paragraph in Cattiaux and Guillin (2008) is devoted to the polynomial regime as well, under an assumption imposing polynomial decay of the $\alpha -$mixing coefficient of the process, but the rate which is obtained is not optimal. In the same spirit, let us cite Guillin, L\'eonard, Wu and Yao (2009) who work in the space of Lipschitz functions under the assumption of a spectral gap. For bounded functions, they obtain a Hoeffding type inequality. Finally, Lezaud (2001) uses Kato's theory of perturbation of operators, still in the exponential regime. Let us also mention that in a completely different setting and having different applications in mind, Pal (2011) establishes concentration inequalities for diffusion laws on the path space $C([0, \infty ) ),$ using quadratic transportation cost inequalities. He 
studies concentration around the median of the distribution, in the exponential regime, for Lipschitz functions on the path space with respect to the uniform norm.  

In contrast to most of the above mentioned papers, we do not assume exponential ergodicity, nor the existence of a spectral gap nor stationarity. We do not need to assume that the process is $\mu -$symmetric. The method we use is the so-called regeneration method. It appeals to the condition of integrability of regeneration times. Let us describe briefly what is the idea of regeneration times. In the easiest situation where the process $X$ has a 
recurrent point $x_0 ,$ we may introduce 
a sequence of stopping times $R_n ,$ called {\it regeneration times}, such that 
\begin{enumerate}
\item
For all $n,$ $R_n < \infty, $  $R_{n+1} = R_n + R_1 \circ \theta_{R_n},$ $R_n \to \infty $ as $n \to \infty .$ (Here, $ \theta $ denotes the shift operator.)
\item
For all $n,$ $X_{R_n } = x_0 .$
\item
For all $n ,$ the process $ (X_{R_n + t})_{t \geq 0 } $ is independent of ${\cal F}_{R_n} .$
\end{enumerate} 
In this case, paths of the process can be decomposed into i.i.d. excursions $[R_i, R_{i+1}[, 
i=1, 2 , \ldots, $ plus an initial segment $[0, R_1],$ and then limit theorems follow immediately from the strong law of large numbers. 

In general, recurrent points exist only in one-dimensional models. For one-dimensional recurrent diffusions 
it has been shown in L\"ocherbach, \\Loukianova and Loukianov (2011) that,   
if for some $p> 1$  the $p$-th moment of the regeneration time exists, then (\ref{erginequality}) holds.

For general multidimensional Harris recurrent processes, 
there is no direct way of defining regeneration times. However, there is a well-known
method of introducing regeneration times artificially, which is known as method of ``Nummelin splitting''
in the case of Markov chains and which has been extended to the case of processes in continuous
time by L\"ocherbach and Loukianova (2008). This method consists of constructing a bigger process $Z = (Z^1, Z^2, Z^3)$ 
taking values in $ E \times [0,1 ] \times E,$ along a sequence
of jump times $0 = T_0 < T_1 < \ldots < T_n < \ldots , $ such that 
\begin{enumerate} 
\item
$Z^1 $ is a copy of the original process $X$, and 
the $T_n$ are arrival times of a rate-$1-$Poisson process, independent of $Z^1 .$
\item
On each time interval $[T_n, T_{n+1}[,$ $Z^2 $ and $Z^3$ are constant. 
\item
The sequence $ (Z^3_{T_n})_n $ is a copy of the resolvent chain $ X_{T_{n+1}} $ (the process $X$ observed
after independent exponential times). 
\item
The sequence $(Z^2_{T_n} )_n $ is a copy of independent random variables, which are uniformly distributed on $[0,1].$ 
\end{enumerate}
The three co-ordinates and the sequence of jump times $(T_n)_n$ are
constructed in a {\it coupled} way, inspired by the splitting technique of Nummelin (1978) and Athreya and Ney (1978) in discrete time. We recall the whole construction  
in Section \ref{section:nummelin}. The main point of this construction is that there exist a measurable set $C$ having $\mu (C) > 0 $ 
($C$ will be a {\it petite set} in the Meyn-Tweedie terminology) and a parameter $\alpha \in ] 0,1] $ such that the 
successive visits of $Z_{T_n} $ to $ C \times [0, \alpha ] \times E $ induce regeneration times for the process $Z.$ 

To resume, for any Harris recurrent Markov process $X, $ the following holds true: the process $X$ can be 
embedded as first co-ordinate into a new Markov process $Z .$ This new process $Z$ possesses regeneration times. 
These regeneration times are closely related to the hitting times of a certain petite set $C$, or in other words: 
the moments of regeneration times are closely related to hitting time moments. Once we have a $p-$th moment for
the regeneration times, we obtain a control on the speed of convergence in the ergodic Theorem and 
(\ref{erginequality}) holds true. 

Note that different coupling techniques in spirit of the so-called Doeblin- or Dobrushin-coupling have been considered in the literature, for example
in the case of diffusions by Veretennikov (1997) and (2004), and for L\'evy-noise driven solutions of SDE's by Kulik (2009). 
These couplings are more specific to the concrete models the authors are interested in -- the coupling technique presented in this
paper has the advantage of being completely general, as far as Harris recurrent processes are concerned. 

Once the coupling is constructed, it remains to establish sufficient conditions on the generator of the process ensuring that $p-$th moments for
regeneration times exist. These conditions are inspired by a
recent work of Douc et al. (2009) on sub-geometric rates of convergence for strong Markov processes. In this work, the authors 
introduce a drift condition towards a closed petite set in the spirit of a condition of existence of a Lyapunov 
function. This condition provides an upper bound on the control of sub-geometric or polynomial moments of 
hitting times where the dependence on the starting point is precisely given.  
The drift condition also provides a verifiable condition ensuring positive Harris recurrence of 
the process. We recall these results in Section \ref{section:drift}. 
Section \ref{section:nummelin} is devoted to give a self-contained description of 
the state of the art concerning the regeneration or Nummelin-splitting-method in the
multidimensional case. 
Section \ref{sec:polmoment} provides a link between the two approaches ``Drift Condition''
of Douc et al. (2009) and ``Nummelin splitting''.  We show that the drift condition of Douc et al. (2009) 
provides an upper bound on the regeneration times introduced according to the method
of Nummelin splitting. More precisely, we show in Theorem \ref{theo:polymoment} that certain polynomial moments up to a precise order are bounded - the bound on the order being determined by the Lyapunov condition. The dependence upon the starting point is controlled by the Lyapunov function as usual.  So even though the moments of regeneration times can not be
explicitly calculated, we get at least upper bounds in the rate of convergence in (\ref{ergthm}). 
As a main application of this result, in Section \ref{lastsection} we state and give the proof of the deviation inequality (\ref{erginequality}). 
Section 6 is devoted to some examples: multi-dimensional diffusions and SDE's driven by a jump noise that are treated in the spirit of a recent work of Kulik (2009). We close the paper with an appendix which recalls the Fuk-Nagaev inequality in the framework needed in Section \ref{lastsection}.

\vskip0.5cm \noindent 
{\bf Acknowledgements.}   
Eva L\"ocherbach has been partially supported by an ANR project: Ce travail a b\'en\'efici\'e d'une aide de l'Agence Nationale de la Recherche
portant la r\'ef\'erence ANR-08-BLAN-0220-01.
\section{Drift-condition, Harris-recurrence and modulated moments}\label{section:drift}

Consider a probability space $(\Omega, {\cal A}, (P_x)_x) .$ Let $X
= (X_t)_{t \geq 0 }$ be a process defined on $(\Omega , {\cal A},
(P_x)_x) $ which is strong Markov, taking values in a locally
compact Polish space $(E,{\cal E}), $ with c\`adl\`ag paths. $ (P_x)_{x \in E}$ is a collection 
of probability measures on $(\Omega, {\cal A})$ such that $X_0 =
x $ $P_x -$almost surely. We write 
$(P_t)_t $ for the transition semigroup of $X .$  Moreover, we shall write
$({\cal F}_t)_t$ for the filtration generated by the process.

Throughout this paper, we impose the following condition on the transition semigroup $(P_t)_t $ of $X .$

\begin{ass}\label{regularitypt}
There exists a sigma-finite positive measure $\Lambda $ on $(E,
{\cal E}) $ such that for every $t > 0,$ $P_t (x,dy ) = p_t(x,y)
\Lambda (dy) ,$ where $(t,x,y) \mapsto p_t (x,y) $ is jointly
measurable.
\end{ass}

We are seeking for conditions ensuring that the process $X$ is recurrent in the
sense of Harris. The most popular conditions for Harris-recurrence are drift conditions or more generally conditions in terms of
a supermartingale property for a functional of the Markov process. We follow Douc et al.~(2009) and impose a drift condition towards a closed petite
set $B $ which implies the Harris recurrence of the process. 
Recall that a set $B \in {\cal E}$ is {\it petite} if there exists a probability measure 
$a$ on ${\cal B} ( \RR_+ ) $ and a measure $\nu_a $ on $(E,{\cal E}) $
such that 
\begin{equation}\label{eq:petite}
 \int_0^\infty P_t (x , dy)a( dt) \geq 1_B (x) \nu_a (dy) . 
\end{equation}

\begin{ass}\label{ass:driftcond}
There exists a closed petite set $B$, a continuous function $V : E \to [1, \infty [ ,$ 
an increasing differentiable concave positive function $\Phi : [1, \infty ) \to (0, \infty ) $
and a constant $b < \infty $ such that for any $s \geq 0,$ $x \in E,$ 
\begin{equation}\label{eq:driftcond}
E_x ( V(X_s)) + E_x \left( \int_0^s \Phi \circ V(X_u) du \right) \le 
V(x) + b E_x \left( \int_0^s 1_B (X_u) du \right) .
\end{equation}
\end{ass} 

\begin{rem}
If $V \in {\cal D} ({\cal A}) $ belongs to the domain of the extended generator ${\cal A}$ of the process
$X ,$ then Theorem 3.4 of Douc et al.~(2009) shows that 
\begin{equation}\label{nice}
{\cal A} V (x) \le - \Phi \circ V(x) + b 1_B (x) 
\end{equation}
implies the above Assumption \ref{ass:driftcond}. 
\end{rem}

By Proposition 3.1 of Douc et al.~(2009), we know that under Assumption 
\ref{ass:driftcond}, 
the process $X$ is positive 
recurrent in the sense of Harris. We write $\mu$ for its invariant probability measure.
Hence, for any
set $A \in {\cal E}$ such that $\mu (A) > 0, $ we have $ \lim\sup_{t \to
\infty} 1_A (X_t ) = 1 $ almost surely. In particular the process is $\mu -$irreducible.

Under Assumption \ref{ass:driftcond}, Douc et al.~(2009) give estimates on modulated moments
of hitting times. Modulated moments are expressions of the type 
$$ E_x \int_0^\tau r(s) f(X_s) ds ,$$
where $\tau $ is a certain hitting time, $r$ a rate function and $f$ any positive measurable function. 
Knowledge of the modulated moments permits to interpolate between the maximal rate of convergence
(taking $f \equiv 1$) and the maximal shape of functions $f$ that can be taken in the
ergodic theorem (taking $r \equiv 1$). In the present paper we are interested in the maximal 
rate of convergence and hence we shall always take $f \equiv 1.$ 

For the function $\Phi$ of (\ref{eq:driftcond}) put 
\begin{equation}\label{eq:ratefunction}
 H_\Phi (u) = \int_1^u \frac{ds }{\Phi(s) },\; u \geq 1 , \; r_\Phi (s) = r(s) = \Phi \circ H_\Phi^{-1} (s) .
\end{equation}
We are interested in choices of the function $\Phi$ that yield a polynomial rate function $r.$ 
This is achieved by the choice $\Phi (v) = c v^\alpha $ for $0 \le \alpha < 1 $ giving rise to polynomial rate functions 
$$ r (s) \sim C s^{\frac{\alpha}{1-\alpha }}.$$
We suppose from now on that Assumption \ref{ass:driftcond} is satisfied with such a kind of function $\Phi (v) = c v^\alpha $ for $0 \le \alpha < 1 .$
The most important technical feature about the rate function that will be useful in the sequel is then the following sub-additivity property 
\begin{equation}\label{eq:subadditive}
 r (t+s) \le c ( r(t) + r(s)), 
\end{equation}  
for $ t, s \geq 0$ and $c$ a positive constant. We shall also use that  
$$ r(t+s) \le r(t) r(s) ,$$
for all $t,s \geq 0 . $ 

We are interested in regeneration time moments. We will see in Section 3 below that regeneration times are almost hitting 
times. Concerning hitting times, the following result is known in the literature. 
Fix $\delta > 0 $ and define for any closed set $A \in {\cal E}$ the delayed hitting time  
$$ \tau_A (\delta ) := \inf \{ t \geq \delta : X_t \in A \} .$$ 
Then Theorem 4.1 and Proposition 4.5, (ii) of Douc et al.~(2009) imply the following two statements.
Firstly, for the rate function $r$ of (\ref{eq:ratefunction}) and for the petite set $B$ of Assumption \ref{ass:driftcond},
\begin{equation}\label{eq:modmoment1}
E_x \int_0^{\tau_B (\delta)} r(s) ds \le V(x) - 1 + \frac{b}{\Phi(1)} \int_0^\delta r(s) ds .
\end{equation}
Second, for the rate function $r$ of (\ref{eq:ratefunction}) and for any closed set $A$ with $\mu (A) > 0 ,$ 
for any $\delta' > 0,$ 
\begin{equation}\label{eq:modmoment2}
E_x \int_0^{\tau_A (\delta')} r(s) ds \le c(A, \delta') \left[ V(x) - 1 + \frac{b}{\Phi(1)} \int_0^\delta r(s) ds \right] 
.
\end{equation}

\begin{rem}
Suppose that $ E = \RR $ and that the process $X$ has continuous trajectories. Fix a recurrent point $a \in \RR .$ Then 
we can choose $ A = [ a , \infty [ , $ if $x < a ,$ $A= ] - \infty , a ] ,$ if $x > a $ in (\ref{eq:modmoment2}) above. In this case, the successive visits 
$$ R_1 := \tau_{\{a\}} (\delta), \; R_{n +1 } := \inf \{ t \geq R_n + \delta : X_t = a \} $$
of the point $a$ are regeneration times of the process.
Hence, (\ref{eq:modmoment2}) gives a control of regeneration time moments in the one-dimensional case. 
\end{rem}

In the general multidimensional case, the times $\tau_A (\delta)$ do not define regeneration times any more.
In this case, at least in general, regeneration times can only be introduced in an artificial manner, using the technique of Nummelin splitting
in continuous time, as developed in L\"ocherbach and Loukianova (2008). 
However, the estimates (\ref{eq:modmoment1}) and (\ref{eq:modmoment2}) can be translated into 
bounds on moments of these new extended regeneration times of the process. This is the main issue of this paper and will be treated in 
section \ref{sec:polmoment} below. 

In the next section we recall the technique of Nummelin splitting and then give the bounds on moments of the regeneration times. But before doing this we first recall some known facts about 
modulated moments of the resolvent chain from Douc et al.~(2004).

\subsection{Modulated moments for the resolvent chain}
Observing the continuous time process after independent exponential times gives rise to the resolvent chain and allows
to use known results in discrete time instead of working with the continuous time process. This trick is quite often used
in the theory of processes in continuous time. 

Write $U^1 (x, dy) := \int_0^{\infty} e^{-t} P_t (x, dy) dt $
for the resolvent kernel associated to the process.
Introduce a sequence $ (\sigma_n)_{n \geq 1} $ of i.i.d.~$exp(1)$-waiting times, independent of the process $X$ itself. Let $T_0 = 0,$ $T_n = \sigma_1 + \ldots
+ \sigma_n$ and $\bar{X}_n = X_{T_n} .$  Then the chain $\bar{X}= (\bar{X}_n)_n$ is recurrent in the sense of Harris, having the same
invariant measure $\mu$ as the continuous time process,
and its one-step transition kernel is given by $U^1 (x, dy).$

Since $X$ is Harris, it 
can be shown (Revuz (1984), see also Proposition 6.7 of H\"opfner and
L\"ocherbach (2003)), that the
resolvent satisfies
\begin{equation}\label{minoration}
 U^1 (x, dy) \geq \alpha 1_C (x) \nu (dy) ,
\end{equation}
where $0 < \alpha \le  1, $ $\mu (C) > 0 $ and $\nu $ a probability measure equivalent to $\mu
(\cdot \cap C)$. The set $C$ is in general not the petite set of Assumption \ref{ass:driftcond}. It can be chosen to be compact.
In particular, (\ref{minoration}) implies that the resolvent chain is aperiodic. 

It is interesting to note that the drift condition (\ref{eq:driftcond}) on the process in continuous time implies a 
similar drift condition on the resolvent chain. More precisely, Theorem 4.9 of Douc et al.~(2009), item (ii), implies that under
Assumption \ref{ass:driftcond} the resolvent chain 
satisfies a drift condition as well, with a different petite set and different functions $\bar \Phi $ and $\bar V,$ but giving rise to the same rate function $r$ since 
$ \bar \Phi (t ( 1 + \Phi' (1))) \sim \Phi (t) $ for $t \to \infty .$ Moreover, 
$$ \| \bar V - V ( 1 + \Phi' (1)) \|_\infty < \infty .$$ 
Now for any measurable set $A$ with $\mu (A) > 0,$ write $\bar \tau_A := \inf \{ n \geq 1  : \bar X_n \in A \} .$
Then, by Douc et al.~(2004), proof of Theorem 2.8, second formula, 
\begin{equation}\label{eq:modmomentres}
 E_x \left[ \sum_{ k =0}^{\bar \tau_A -1 } r (k) \right] \le c_1 (A) \bar V (x) + c_2 (A) \le c_1 V(x) + c_2,
 \end{equation}
since $ \bar V(x) \le c_1 V(x) +c_2.$ 

After these preliminaries on resolvent chains we now turn to the description of the regeneration method in the case
of a general state space. 

\section{Nummelin splitting and regeneration times}\label{section:nummelin}
Regeneration times can be introduced for any Harris recurrent strong Markov process under
the Assumption \ref{regularitypt} -- without any further assumption. We make once more use of the resolvent
chain. Recall the definition of the resolvent kernel
$U^1 $ and the lower bound (\ref{minoration}) which holds under the only assumption of Harris recurrence:
$$ U^1 (x, dy) \geq \alpha 1_C (x) \nu (dy) ,$$
where $C$ is a fixed compact petite set with $\mu (C) > 0 .$ Note that since $\mu (C) > 0,$  (\ref{eq:modmoment2}) and (\ref{eq:modmomentres}) hold for the hitting time of this set $C.$  

\begin{rem}
Fort and Roberts (2005) and Douc et al.~(2009) impose quite 
systematically the condition of irreducibility of some skeleton chain, see e.g.~Theorem 3.2 and Theorem 3.3 of Douc et al.~(2009). 
This implies the existence of some $m$ such that $P_m$ satisfies 
$$ P_m (x, dy) \geq \alpha 1_C (x) \nu (dy) .$$
This condition is obviously stronger than (\ref{minoration}) 
and implies that the process is not only positive Harris recurrent but also ergodic, i.e. for all $x \in E,$ 
$$ || P_t (x,.) - \mu ||_{TV} \to 0 .$$
We do not impose this additional condition. 
\end{rem}

We now show how to construct regeneration times in
continuous time by using the technique of Nummelin splitting which has been introduced for Harris recurrent Markov chains in discrete time by
Nummelin (1978) and Athreya and Ney (1978). The idea is to define on an extension of the original space $(\Omega, {\cal A}, (P_x))$ a Markov process $Z = (Z_t)_{t \geq 0}= (Z_t^1, Z_t^2, Z_t^3)_{t \geq 0},$ taking values in $E \times [0, 1 ] \times E$ such that the times $T_n$ are jump times of the process and such that $((Z_t^1)_t, (T_n)_n)$ has the same distribution as $((X_t)_t, (T_n)_n).$ We recall the details of this construction from L\"ocherbach and Loukianova (2008). 

First of all, define the split kernel $Q ((x,u),dy).$ This is a transition kernel $Q ((x,u), dy) $ from $ E \times [0, 1 ] $ to $E $ defined by 
\begin{equation}\label{Q}
Q((x,u), dy) = \left\{
\begin{array}{ll}
\nu(dy) & \mbox{ if } (x,u) \in C \times [0, \alpha]\\
\frac{1}{1 - \alpha} \left( U^1 (x, dy) - \alpha \nu(dy) \right)  & \mbox{ if } (x,u) \in C \times ] \alpha , 1] \\
U^1 (x,dy) & \mbox{ if } x \notin C .
\end{array} \right. 
\end{equation}

\begin{rem}
This kernel is called split kernel since $\int_0^1 du Q((x,u), dy) = U^1 (x, dy).$ Thus $Q$ is a splitting of the resolvent kernel by means of the
additional ``colour'' $u.$
\end{rem}

Write $u^1 (x, x') := \int_0^\infty e^{-t} p_t (x, x') dt .$ We now show how to construct the process $Z$ recursively over time
intervals $[T_n, T_{n+1}[ , n \geq 0 .$ We start with some initial condition $Z^1_0  =  X_0 = x  ,$ $ Z_0^2 = u \in [0,1], $ $ Z_0^3 = x' \in E. $
Then inductively in $n \geq 0,$ on $Z_{T_n} = (x,u,x') :$
\begin{enumerate}
\item
Choose a new jump time $\sigma_{n+1} $ according to
$$e^{-t} \; \frac{p_t(x,x')}{u^1 (x, x') } \, dt  \mbox{ on } \RR_+,$$
where we define $0/0 := a/ \infty :=  1,$ for any $a \geq 0,$
and put $T_{n+1} := T_n + \sigma_{n+1}.$
\item
On $\{ \sigma_{n+1} = t \},$ put $Z_{T_n +s}^2 := u ,$ $Z_{T_n +s}^3 := x'  $ for all $0 \le s < t .$
\item
For every $s < t,$ choose
$$Z_{T_n + s}^1 \sim \frac{p_s(x,y) p_{t-s}(y , x')}{p_t (x,x') } \;  \Lambda (dy ) .$$
Choose $ Z_{T_n + s}^1 := x_0$ for some fixed point $x_0 \in E$ on $\{ p_t (x, x') = 0 \}.$
Moreover, given $Z_{T_n + s}^1 = y,$ on $s + u < t, $ choose
$$ Z^1_{T_n + s+ u } \sim \frac{p_u (y, y') p_{t-s-u}(y' , x')}{p_{t-s} (y, x') } \Lambda (dy') .$$
Again, on $\{ p_{t-s} (y, x')  = 0 \},$ choose $ Z^1_{T_n + s+ u } = x_0 .$
\item
At the jump time $T_{n+1},$ choose $Z^1_{T_{n+1}}  := Z^3_{T_n} = x' .  $ Choose $Z_{T_{n+1}}^2 $ independently of $Z_s, s < T_{n+1}  ,$ according to the uniform law $U.$ Finally, on $\{  Z^2_{T_{n+1}} = u' \},$ choose $ Z^3_{T_{n+1}} \sim Q((x',u'), dx'' ) .$
\end{enumerate}

Note that by construction, given the initial value of $Z$ at time $T_n,$ the evolution of the process $Z^1$  during $[T_n, T_{n+1}[$ does not depend on the chosen value of $Z^2_{T_n} .$

We will write $P_{\pi}$ for the measure related to $X$, under which $X$ starts from the initial measure $\pi(dx)$, and $\P_{\pi}$ for the measure related to $Z$, under which $Z$ starts from the initial measure $\pi(dx)\otimes U(du)\otimes Q((x,u),dy)$. Hence, $ \P_{x_0} $ denotes the
measure related to $Z$ under which $Z$ starts from the initial measure $ \delta_{x_0} (dx )  \otimes U(du)\otimes Q((x,u),dy)$. In the same spirit we denote
$E_{\pi}$ the expectation with respect to $P_{\pi}$  and $\E_{\pi}$ the expectation with respect to $\P_{\pi}$. Moreover, we shall write $\FF $ for  the filtration generated by $Z,$ $\GG $ for the filtration generated by the first two co-ordinates $Z^1 $ and $Z^2 $ of the process, and $\FF^X $ for the sub-filtration generated by $X $ interpreted as first co-ordinate of $Z.$

The new process $Z$ is a Markov process with respect to its filtration $\FF .$  For a proof of this result, the interested reader is referred to Theorem 2.7 of L\"ocherbach and Loukianova (2008). In general, $Z$ will no longer be strong Markov. But for any $n \geq 0,$ by construction, the strong Markov property holds with respect to $T_n.$ Thus for any $f,g : E \times [0,1] \times E \to \RR $ measurable and bounded, for any $s > 0$ fixed, for any initial measure $\pi $ on $(E, {\cal E}),$
$$ \E_\pi(g(Z_{T_n}) f(Z_{T_n +s })) = \E_\pi( g(Z_{T_n}) \E_{Z_{T_n}} ( f(Z_s)) ) .$$
Finally, an important point is that by construction, 
$$ {\cal L} ( (Z_t^1)_t | \P_x ) = {\cal L} (( X_t)_t | P_x )$$
for any $x \in E,$ thus the first co-ordinate of the process $Z$ is indeed a copy of the original Markov process $X,$
when disregarding the additional colours $ (Z^2, Z^3).$ 

However, adding the colours $ (Z^2, Z^3)$ allows to introduce regeneration times for the process $Z $ (not for $X$ itself).
More precisely, write
$$ A := C \times [0, \alpha ] \times E $$
and put
\begin{multline}\label{eq:regtimes}
 S_0 := 0, \; R_0 := 0 , S_{n+1} := \inf \{ T_m > R_n:  Z_{T_m} \in A  \},\\ R_{n+1} := \inf\{ T_m : T_m > S_{n+1} \}  .
\end{multline}
The sequence of $\FF -$stopping times $R_n$ generalises the notion of life-cycle
decomposition in the following sense.

\begin{prop}\label{iid} [Proposition 2.6 and 2.13 of L\"ocherbach and Loukianova (2008)]\\
a) Under $\P_x ,$ the sequence of jump times $(T_n)_n$ is independent of the first co-ordinate process $(Z_t^1)_t$ and $ (T_n - T_{n-1})_n$ are i.i.d.~$exp(1)-$variables.\\
b) At regeneration times, we start from a fixed initial distribution which does not depend on the past: $Z_{R_n}   \sim \nu(dx) U(du) Q((x,u), dx') $ for all $n \geq 1 .$\\
c) At regeneration times, we start afresh and have independence after a waiting time: $Z_{R_n + \cdot}$ is independent of ${\cal F}_{S_{n}-} $ for all $ n \geq 1.$\\
d) The sequence of $(Z_{R_n})_{n\geq 1} $ is i.i.d.
\end{prop}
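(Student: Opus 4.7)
The plan is to verify assertions (a)--(d) directly from the recursive construction of $Z$ in Section \ref{section:nummelin}, exploiting that by construction the (strong) Markov property holds at each jump time $T_n$, together with the explicit form of the split kernel $Q$ on $C \times [0,\alpha]$.

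For (a), I would compute, conditionally on $Z_{T_n} = (x, u, x')$, the joint law of $(\sigma_{n+1}, Z^3_{T_n}, (Z^1_{T_n + s})_{0 \le s < \sigma_{n+1}})$. Step (1) of the construction contributes the time density $e^{-t} p_t(x, x')/u^1(x, x')$, and step (3) contributes the bridge density $p_{s_1}(x, y_1) p_{s_2 - s_1}(y_1, y_2) \cdots p_{t - s_k}(y_k, x')/p_t(x, x')$ for a finite-dimensional marginal. When $Z^3_{T_n}$ is drawn from $Q((x, u), \cdot)$ in step (4) and averaged over $u$ uniform on $[0,1]$, the identity $\int_0^1 Q((x, u), dx')\, du = U^1(x, dx')$ cancels the denominator $u^1(x, x')$ appearing in the time density. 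Integrating out $x'$ against $\Lambda$ then produces the fully factorized expression $e^{-t} \cdot p_{s_1}(x, y_1) \cdots p_{s_k - s_{k-1}}(y_{k-1}, y_k)$. This simultaneously shows that $\sigma_{n+1} \sim \exp(1)$ is independent of $(Z^1_t)_{T_n \le t < T_{n+1}}$, and, identifying with the Chapman--Kolmogorov marginals of $X$, that the law of $(Z^1_t)_t$ under $\P_x$ coincides with that of $(X_t)_t$ under $P_x$. Iterating over $n$ using the Markov property at $T_n$ yields (a).

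For (b) and (c), the decisive observation is the first case of the split kernel (\ref{Q}). When $S_n = T_m$, the very definition of $S_n$ forces $(Z^1_{T_m}, Z^2_{T_m}) \in C \times [0, \alpha]$, so step (4) at $T_m$ draws $Z^3_{T_m} \sim \nu$, independently of $Z^1_{T_m}$ and of all prior history. Step (4) then applied at $R_n = T_{m+1}$ sets $Z^1_{R_n} := Z^3_{S_n} \sim \nu$, redraws $Z^2_{R_n}$ uniformly on $[0,1]$, and draws $Z^3_{R_n} \sim Q((Z^1_{R_n}, Z^2_{R_n}), \cdot)$, all from fresh randomness. This yields directly the product law of (b). For (c), the whole evolution on $[R_n, \infty[$ is a measurable function of $Z_{R_n}$ and randomness sampled strictly after $S_n$; since $Z_{R_n}$ is itself a function of $Z^3_{S_n}$ (independent of ${\cal F}_{S_n-}$) and of independent fresh variables, the Markov property at $T_{m+1}$ delivers the claimed independence of the post-$R_n$ trajectory from ${\cal F}_{S_n-}$.

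Claim (d) is then immediate from (b) and (c): by (c), $Z_{R_{n+1}}$ is measurable with respect to $\sigma(Z_{R_n + s} : s \ge 0)$, which is independent of ${\cal F}_{S_n-} \supset \sigma(Z_{R_1}, \ldots, Z_{R_n})$, while (b) identifies the common distribution. The subtlest step is the precise handling of the random index $m$ with $S_n = T_m$: one must verify that $S_n$ is an ${\cal F}$-stopping time and that the coloring randomness $Z^2_{S_n}, Z^3_{S_n}$, being drawn \emph{at} $S_n$ in step (4), escapes ${\cal F}_{S_n-}$, so that $Z^3_{S_n}$ remains genuinely $\nu$-distributed and independent of the strict past even after conditioning on $\{Z_{S_n} \in A\}$.
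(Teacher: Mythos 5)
The paper does not prove this proposition itself; it cites Proposition 2.6 and 2.13 of L\"ocherbach and Loukianova (2008), and the statement rests on the recursive construction of $Z$ recalled in Section~\ref{section:nummelin}. Your strategy of reading (a)--(d) directly off that construction is the natural one and, for parts (a)--(c), essentially correct. In particular you correctly identify the crux: on $\{ S_n = T_m \}$ one necessarily has $(Z^1_{T_m}, Z^2_{T_m}) \in C \times [0,\alpha]$, so the first line of the split kernel~(\ref{Q}) applies and $Z^3_{S_n} \sim \nu$; moreover $Z^2_{S_n}$ and $Z^3_{S_n}$ are generated \emph{at} the jump, hence are not ${\cal F}_{S_n-}$--measurable, which is precisely why independence holds with respect to ${\cal F}_{S_n-}$ rather than ${\cal F}_{S_n}$. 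Your computation for (a), using $\int_0^1 Q((x,u),dx')\,du = U^1(x,dx')$ to cancel the $u^1(x,x')$ in the waiting-time density and recover $e^{-t}\,dt$ times the Chapman--Kolmogorov bridge marginals, is the right mechanism.

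There is, however, an index slip in your derivation of (d) that makes the argument fail as written. You invoke (c) at level $n$ --- $Z_{R_n+\cdot}$ independent of ${\cal F}_{S_n-}$ --- and then claim ${\cal F}_{S_n-} \supset \sigma(Z_{R_1},\dots,Z_{R_n})$. But $R_n = T_{m+1} > T_m = S_n$, so $Z_{R_n}$ is \emph{not} ${\cal F}_{S_n-}$--measurable; it is itself part of the post-$S_n$ data that (c) declares independent of ${\cal F}_{S_n-}$. The correct inclusion is only $\sigma(Z_{R_1},\dots,Z_{R_{n-1}}) \subset {\cal F}_{R_{n-1}} \subset {\cal F}_{S_n-}$, using $R_{n-1} < S_n$. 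The fix is then straightforward: apply (c) at level $n$ to conclude that $Z_{R_n}$ (a measurable function of $Z_{R_n+\cdot}$, evaluating at $0$) is independent of $\sigma(Z_{R_1},\dots,Z_{R_{n-1}})$; equivalently, apply (c) at level $n+1$ to get $Z_{R_{n+1}}$ independent of ${\cal F}_{S_{n+1}-} \supset \sigma(Z_{R_1},\dots,Z_{R_n})$. Iterating over $n$, together with the common marginal from (b), yields (d). With that correction your proof goes through.
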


Since the original process $X$ -- under Assumption \ref{ass:driftcond} -- is Harris with invariant measure $\mu,$ the new process $Z$ will be Harris, too. We shall write $\Pi$ for its 
invariant probability measure. $\Pi$ can be written in terms of an occupation time formula which is a consequence of 
Chacon-Ornstein's ratio limit theorem. In order to state this theorem, let us recall
that an additive functional of the process $Z$ is a $\bar{\RR}_+ -$valued, $\FF-$adapted process $A = (A_t)_{t \geq 0} $ such that
\begin{enumerate}
\item
Almost surely, the process is non-decreasing, right-continuous, having $A_0 = 0. $
\item
For any $s, t \geq 0, $ $A_{s+ t } = A_t + A_s \circ \theta_t $ almost surely. Here, $\theta $ denotes the shift operator.  
\end{enumerate}
The additive functional is called integrable if $ \E_\Pi ( A_1) < + \infty .$ 
Examples for integrable additive functionals are $ A_t = \int_0^t f( Z_s) ds , $ where $f$ is a positive measurable
function, integrable with respect to the invariant measure $\Pi .$ 

\begin{prop}[Chacon-Ornstein's ratio limit theorem]\label{prop:harris}
Let $A_t, B_t $ be any positive additive functionals of $Z$ such that $ \E_\Pi ( B_1) > 0 .$ Then 
$$  \frac{A_t}{B_t} \to \frac{ \E_\Pi ( A_1) }{ \E_\Pi ( B_1)} \quad\P_x-\mbox{ almost surely, as } t \to \infty ,$$
for any $x \in E.$ Moreover, $Z$ is recurrent in the sense of Harris and its unique invariant probability measure $\Pi $ is given by 
\begin{equation}\label{eq:fixedmu}
 \Pi (f) = \ell \; \E_\pi  \int_{R_1}^{R_2} f(Z_s) ds ,
\end{equation}
where $ \ell =  \E (R_2 - R_1)^{-1} > 0.$
\end{prop}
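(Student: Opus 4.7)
The plan is to exploit the regeneration structure of Proposition 3.1 to reduce everything to classical renewal theory and the strong law of large numbers for i.i.d.~sums. Since $(Z_{R_n})_{n\geq 1}$ is i.i.d.~with common law $\nu(dx) U(du) Q((x,u),dx')$, the post-$R_n$ blocks of the path are i.i.d.~copies of the post-$R_1$ excursion. This regenerative structure is exactly what is needed.

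First I would verify Harris recurrence of $Z$. For any measurable $D$ charged by $\Pi$, the quantity $\E_\pi \int_{R_1}^{R_2} 1_D(Z_s)\,ds$ is strictly positive by the very formula that defines $\Pi$. Since by Proposition 3.1 the excursion integrals $\int_{R_n}^{R_{n+1}} 1_D(Z_s)\,ds$ form an i.i.d.~sequence with this positive mean, the second Borel--Cantelli lemma forces infinitely many of them to be positive almost surely, so $Z$ visits $D$ infinitely often.

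For the ratio limit, set $\xi_n := A_{R_{n+1}}-A_{R_n}$, $\eta_n := B_{R_{n+1}}-B_{R_n}$ and $\tau_n := R_{n+1}-R_n$ for $n\geq 1$. The additive functional property $A_{s+t}=A_t+A_s\circ\theta_t$ combined with Proposition 3.1(c,d) makes $(\xi_n,\eta_n,\tau_n)_{n\geq 1}$ an i.i.d.~sequence under $\P_x$, with common means $\E_\pi A_{R_1}$, $\E_\pi B_{R_1}$ and $\E_\pi R_1$ (by the Palm-type identification $\xi_1 \stackrel{d}{=} A_{R_1}\circ\theta_{R_1}$ under the post-$R_1$ distribution, which coincides with $\P_\pi$ where $\pi = \nu U Q$). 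The classical SLLN then gives
$$ \frac{A_{R_n}}{n}\to \E_\pi A_{R_1},\qquad \frac{B_{R_n}}{n}\to \E_\pi B_{R_1},\qquad \frac{R_n}{n}\to \E_\pi R_1 =: 1/\ell, $$
the initial segments $A_{R_1}$, $B_{R_1}$, $R_1$ being finite $\P_x$-a.s.\ by Harris recurrence. Letting $N(t)=\max\{n:R_n\le t\}$, renewal theory yields $R_{N(t)}/t\to 1$, and monotonicity of the additive functionals gives the sandwich
$$ \frac{A_{R_{N(t)}}}{B_{R_{N(t)+1}}}\le \frac{A_t}{B_t}\le \frac{A_{R_{N(t)+1}}}{B_{R_{N(t)}}}, $$
both bounds converging $\P_x$-a.s.~to $\E_\pi A_{R_1}/\E_\pi B_{R_1}$, provided $\E_\pi B_1 > 0$ (which by the same block decomposition is equivalent to $\E_\pi B_{R_1}>0$).

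Finally, specializing to $A_t=\int_0^t f(Z_s)\,ds$ and $B_t=t$ identifies the ratio as $\ell\,\E_\pi \int_{R_1}^{R_2} f(Z_s)\,ds$. Checking that the right-hand side defines an invariant probability measure $\Pi$ is a direct computation: invariance follows from the strong Markov property of $Z$ at the $T_n$'s together with the identity $R_1\circ\theta_{T_1} \stackrel{d}{=} R_2-R_1$ under $\P_{Z_{R_1}}$, and normalization uses precisely $\ell^{-1}=\E_\pi(R_2-R_1)$. The main obstacle is justifying that $\E_\pi(R_2-R_1)<\infty$, i.e.~\emph{positive} Harris recurrence of $Z$; this inherits from the positive Harris recurrence of $X$ under Assumption 2.2, since a sigma-finite invariant measure for $Z$ can be built from $\mu$ via the occupation-time formula and then normalized. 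The uniqueness of $\Pi$ is standard for Harris processes once existence is shown.
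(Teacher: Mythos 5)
Your overall strategy is exactly the one the paper has in mind when it writes that the proposition ``follows easily from the regeneration property,'' and most of your steps (the sandwich bound via monotonicity of additive functionals, the identification $\Pi(f)=\ell\,\E_\pi\int_{R_1}^{R_2}f(Z_s)\,ds$ by taking $B_t=t$, the normalization through $\E_\pi(R_2-R_1)<\infty$) are correct and well organised. However, there is a genuine error in the key step: the triples $(\xi_n,\eta_n,\tau_n)_{n\geq 1}$ are \emph{not} i.i.d.\ under $\P_x$. Proposition \ref{iid}(c) only gives that $Z_{R_n+\cdot}$ is independent of ${\cal F}_{S_n-}$, not of ${\cal F}_{R_n}$, and since $R_n>S_n$ the $(n-1)$-th block is ${\cal F}_{R_n}$-measurable but not ${\cal F}_{S_n-}$-measurable; concretely, the length of the waiting interval $R_n-S_n$ and hence the contribution of that interval to $\xi_{n-1}$ is drawn from a law depending on $Z^3_{S_n}$, which is the very quantity that seeds the $n$-th block. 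So $\xi_{n-1}$ and $\xi_n$ are in general dependent. The paper itself spells this out explicitly in the proof of Theorem \ref{theo:deviationnt}: the block variables are ``two-dependent\ldots\ not independent of ${\cal F}_{R_k}$, but only independent of ${\cal F}_{R_{k-1}}$.''

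This does not destroy your proof, because a stationary $2$-dependent sequence with finite mean still obeys the strong law of large numbers (split into odd- and even-indexed subsequences, each of which is genuinely i.i.d.\ under the regeneration structure --- precisely the device the paper uses later in Section \ref{lastsection} via (\ref{eq:xi1})). But you must invoke the SLLN in that form rather than the i.i.d.\ form, and the ``Palm-type identification'' sentence should be stated at the level of the block distributions, not as a claim of independence. With that correction the argument goes through and coincides with the paper's intended proof.
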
 

\begin{proof}
The proof follows easily from the regeneration property with respect to the regeneration times $R_n.$ 
\end{proof}

The invariant measure $\mu$ of the original process $X$ is the projection onto the first co-ordinate of $\Pi. $ 
From this we deduce that the 
invariant probability measure $\mu $ of the original process $X$ must be given by 
\begin{equation}\label{eq:fixedmu2}
 \mu (f) = \ell \; \E_\pi  \int_{R_1}^{R_2} f(X_s) ds ,
\end{equation}
where we recall that $ \ell =  \E (R_2 - R_1)^{-1} > 0.$ In the above formula we interpret $X$ as first co-ordinate of $Z$, under 
$\P_\pi$ \footnote{Actually, we should write $ \E_\pi  \int_{R_1}^{R_2} f(Z^1_s) ds $ -- but if not otherwise indicated, this
identification will always be implicitly assumed.}. $R_2 - R_1 $ is the length of one regeneration period. 
Under assumption (\ref{ass:driftcond}), the process is positive recurrent and hence the expected
length $\ell$ of one regeneration period is finite. 

We now turn to the main issue of this article which is the control of the speed of convergence in the ergodic theorem.  
As a consequence of the above considerations, we can write
\begin{multline}\label{eq:speed}
 P_x \left( \left| \frac1t \int_0^t f(X_s) ds - \mu (f)\right| > \delta \right) \\
= \P_x  \left( \left| \frac1t \int_0^t f (Z^1_s) ds - \ell \; \E_\pi  \int_{R_1}^{R_2} f(Z^1_s) ds\right| > \delta \right) ,
\end{multline} 
where we recall that $\P_x $ denotes the measure related to $Z$ under which $Z_0 \sim \delta_x \otimes U(du) \otimes Q((x,u), dy) .$ 
The more moments of the regeneration period $ R_2 - R_1 $ exist, the more
the process is recurrent and the more the convergence in (\ref{eq:speed}) is fast.

We first give estimates on the polynomial moments 
$$ \E_x \int_0^{R_1} r(s) ds ,$$
depending on the starting point $x.$ Integrating this against $\nu (dx) $ gives then a control on the corresponding moment of the regeneration period. This integration does not pose any problems because the support of the measure $\nu$ is the compact set $C.$ 
Since our regeneration times are built based on the resolvent chain, the main technical ingredient that allows such a control will be 
the estimate (\ref{eq:modmomentres}) rather than (\ref{eq:modmoment2}). 

\section{Polynomial moments of regeneration times}\label{sec:polmoment}
The aim of this section is to show that the results of Douc et al.~(2009) can be translated immediately
into a control of moments of regeneration times. This yields somehow a link between the two 
different approaches ``Drift conditions'' versus ``Nummelin''.  Recall the definition of $r(s) = r_\Phi (s) $ in (\ref{eq:ratefunction}).
\begin{theo}\label{theo:polymoment}
Grant assumptions \ref{regularitypt} and \ref{ass:driftcond} with a function $\Phi (v) = c v^\alpha ,$ where $0 \le \alpha < 1.$ Then there exist constants $c_1$ and $c_2,$ such that 
$$
 \E_x \int_0^{ R_1} r(s) ds \le c_1  V(x)  + c_2  .
$$
\end{theo}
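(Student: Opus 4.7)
The plan is to reduce the statement to a pure moment bound on $R_1$. Since the rate function (\ref{eq:ratefunction}) associated with $\Phi(v) = c v^\alpha$ satisfies $r(s) \sim C s^{\beta}$ with $\beta := \alpha/(1-\alpha) \geq 0$, one has $r(s) \leq C(1+s^\beta)$ and hence $\int_0^{R_1} r(s)\,ds \leq C'(1 + R_1^{\beta+1})$, so it suffices to prove $\E_x R_1^{\beta+1} \leq \tilde c_1 V(x) + \tilde c_2$. By (\ref{eq:regtimes}), $R_1 = T_{M+1} = T_M + \sigma_{M+1}$ with $M := \inf\{n \geq 1 : Z_{T_n} \in A\}$ and $A = C \times [0,\alpha] \times E$, and $\{M \leq n\}$ depends only on $(Z^1_{T_k}, Z^2_{T_k})_{k \leq n}$; hence $M$ is a stopping time for $(\FF_{T_n})_n$. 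The increment $\sigma_{M+1}$ is by construction an $\mathrm{Exp}(1)$ variable independent of $\FF_{T_M}$, so $\E_x \sigma_{M+1}^{\beta+1}$ is a finite universal constant.

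To treat $T_M$, write $T_M = M + \sum_{j=1}^M (\sigma_j - 1)$. The sum is a centered random walk evaluated at the stopping time $M$, and its increments have all moments; the Burkholder-Davis-Gundy (equivalently, Marcinkiewicz-Zygmund) inequality therefore yields $\E_x |\sum_{j=1}^M (\sigma_j-1)|^{\beta+1} \leq C\, \E_x M^{\max((\beta+1)/2, 1)} \leq C' \E_x M^{\beta+1} + C''$, whence $\E_x T_M^{\beta+1} \leq \tilde C(\E_x M^{\beta+1} + 1)$. Combined with $(a+b)^{\beta+1} \leq 2^\beta(a^{\beta+1} + b^{\beta+1})$, the problem is reduced to estimating the discrete moment $\E_x M^{\beta+1}$. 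For this, let $\eta_1 < \eta_2 < \ldots$ be the successive visits of the resolvent chain $\bar X_n = Z^1_{T_n}$ to $C$. Since the uniforms $(Z^2_{T_n})_n$ are i.i.d.~$U([0,1])$ independent of $\bar X$, the index $N := \inf\{k \geq 1 : Z^2_{T_{\eta_k}} \leq \alpha\}$ is geometric with parameter $\alpha$, independent of $\bar X$, and $M = \eta_N$. Applying (\ref{eq:modmomentres}) with $A = C$ together with $r(k) \geq c' k^\beta$ gives $\E_x \eta_1^{\beta+1} \leq c_1 V(x) + c_2$, and by continuity of $V$ on the compact set $C$ one has the uniform bound $\sup_{y \in C} \E_y \eta_1^{\beta+1} =: M_C < \infty$. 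The strong Markov property of $\bar X$ at the $\eta_k$'s combined with Minkowski's inequality yields $\E_x \eta_k^{\beta+1} \leq \widetilde C\, k^{\beta+1}(V(x) + 1)$; summing against the geometric law of $N$ produces $\E_x M^{\beta+1} \leq C(V(x) + 1)$, which is the required bound.

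The main technical obstacle lies in treating $M$ simultaneously as a stopping time of the continuous-time filtration $\FF$ -- which is what makes $\sigma_{M+1}$ independent of the past and enables the Burkholder-Davis-Gundy control of $T_M$ in terms of $M$ -- and as the geometric aggregation $M = \eta_N$ of discrete hitting times of the resolvent chain to $C$, which alone lets one import the Douc-Fort-Guillin modulated-moment bound (\ref{eq:modmomentres}) on the resolvent chain. Once these two viewpoints are reconciled, the remaining steps are routine calculations.
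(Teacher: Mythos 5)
Your outer reduction is the right skeleton and matches the roles of $\tilde S_1$, $S_1$, $R_1$ in the paper's proof: bound $\E_x R_1^{1/(1-\alpha)}$, write $R_1 = T_{M+1}$ with $M$ the index of the first visit to $A = C \times [0,\alpha] \times E$, and further write $M = \eta_N$ where $\eta_k$ are the successive $C$-visit indices of the resolvent chain $\bar X = (Z^1_{T_n})_n$ and $N$ is the number of trials until a uniform falls below $\alpha$. But the pivotal assertion that \emph{``$N$ is geometric with parameter $\alpha$, independent of $\bar X$''} is false, and it is exactly the obstruction that the paper's proof of Theorem \ref{theo:polymoment} identifies and spends its entire step 2 overcoming. $N$ is indeed geometric (this is the content of $\P_x(\tilde S_n < S_1) = (1-\alpha)^n$), but it is not independent of the path of $\bar X$: by the definition (\ref{Q}) of the split kernel, whenever $Z^1_{T_n}=x \in C$ the next value $Z^1_{T_{n+1}} = Z^3_{T_n}$ is drawn from $\nu$ if $Z^2_{T_n}\le \alpha$ and from $\frac{1}{1-\alpha}\left(U^1(x,\cdot)-\alpha\nu\right)$ if $Z^2_{T_n}>\alpha$, so the uniform and the chain's transition are coupled at each $C$-visit. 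Conditioning on $\{N>k\}$ therefore distorts the transition kernel at each of the first $k$ returns to $C$, and the law of $\eta_j - \eta_{j-1}$ under that conditioning is \emph{not} that of $\eta_1$ for the resolvent chain started from $\bar X_{\eta_{j-1}}$. Hence ``strong Markov at $\eta_k$, Minkowski, sum against the geometric law'' does not go through: $\sup_{y \in C}\E_y\,\eta_1^{\beta+1}$ does not dominate the quantities you actually need. The paper flags this explicitly -- it notes that $\E_{Z_{\tilde S_n}}\int_0^{\tilde S_1}r(s)\,ds$ cannot be bounded by $\sup_{x\in C}\E_x\int_0^{\tilde S_1}r(s)\,ds$, and that the naive estimate only yields a supremum over $(x,u,z)$ with $u>\alpha$, which (\ref{eq:firstresult}) does not control -- and then repairs the argument by conditioning on the reduced filtration $\GG_{\tilde S_n}$ (generated by $Z^1,Z^2$ only), integrating out the third coordinate via $\int_0^1 du\,Q((x,u),dy)=U^1(x,dy)$ and the domination $u^1(x,\cdot)-\alpha\nu \le u^1(x,\cdot)$, and re-using (\ref{eq:inpart}) at a cost of a factor $1/(1-\alpha)$ per cycle, absorbed through the H\"older argument of (\ref{eq:horrible})--(\ref{eq:claim}).

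A secondary gap of the same flavour sits in the control of $T_M$ and $\sigma_{M+1}$. Conditionally on $\FF_{T_n}$, the waiting time $\sigma_{n+1}$ has density $e^{-t}p_t(Z^1_{T_n},Z^3_{T_n})/u^1(Z^1_{T_n},Z^3_{T_n})$, not $e^{-t}$; so $\sigma_j - 1$ is not a martingale difference with respect to $(\FF_{T_n})_n$, and $\sigma_{M+1}$ is not an $\exp(1)$ variable independent of $\FF_{T_M}$. Proposition \ref{iid}(a) gives i.i.d.\ $\exp(1)$ spacings only after integrating out the third coordinate (independence of $(T_n)$ from $Z^1$), while your stopping time $M$ is built from $(Z^1,Z^2)$ and the third coordinate re-enters through $Q$; so the Burkholder--Davis--Gundy/Wald-type steps reducing $\E_x T_M^{\beta+1}$ to $\E_x M^{\beta+1}$ need the same de-coupling care. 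The paper avoids this entirely by never separating $T_n$ from $M$: via the H\"opfner--L\"ocherbach identity it rewrites $\E_x\int_0^{\tilde S_1}r(s)\,ds$ directly as the modulated sum (\ref{eq:modmoment4}) over the resolvent chain and deals with the $r(T_n)$-versus-$r(n)$ discrepancy by a separate large-deviation estimate (\ref{eq:important1}).
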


\begin{rem}
For $\Phi (v) = c v^\alpha ,$ it can be easily shown that there exists a constant $c$ such that $r(s) =  r_\Phi (s)  \geq c \, s^{\frac{\alpha}{1 - \alpha }} .$ 
Hence the above theorem implies the control of polynomial moments of the regeneration time, i.e.  
\begin{equation}\label{eq:wichtig}
 \E_x R_1^{ \frac{1}{1 - \alpha }} \le \tilde c_1 V(x) + \tilde c_2 .
\end{equation}  
\end{rem}

\begin{proof}
Recall the definition of the regeneration times in (\ref{eq:regtimes}). Let 
$$ \tilde S_1 := \inf \{ T_n : Z_{T_n}^1 \in C \} , \; \tilde S_{n+1} := 
\inf \{ T_k > \tilde S_n :  Z_{T_k}^1 \in C \} .$$
Obviously, $ R_1 \geq \tilde S_1  .$  \\

1. In the following, $c$ will denote a constant that might change from line to line. We first show how to control 
$$ \E_x \int_0^{\tilde S_1} r(s) ds .$$
In a first step we show that 
\begin{equation}\label{eq:firstly}
  \E_x \int_0^{\tilde S_1} r(s) ds = \E_x \int_0^\infty e^{- \int_0^t 1_C ( Z_s^1 ) ds } r(t) dt = E_x \int_0^\infty e^{- \int_0^t 1_C ( X_s ) ds } r(t) dt .
\end{equation}
This can be seen as follows. First, in order to obtain the law of $\tilde S_1,$ we evaluate for any $a > 0,$
\begin{eqnarray*}
\P_x (\tilde S_1 > a) &=& \sum_{ n \geq 1} \P_x ( \tilde S_1 = T_n, T_n > a ) \\
&=& \sum_{ n \geq 1} \P_x ( Z^1_{T_1 } \in C^c , \ldots , Z^1_{T_{n-1}} \in C^c , Z^1_{T_n} \in C, T_n > a ) \\
&=& \sum_{ n \geq 1} P_x ( X_{T_1 } \in C^c , \ldots , X_{T_{n-1}} \in C^c , X_{T_n} \in C, T_n > a ) \\
&=& E_x \left( \sum_{ n \geq 1} ( 1 - 1_C ( X_{T_1 })) \cdots ( 1 - 1_C ( X_{T_{n-1} })) f( X_{T_n}, T_n) \right) , 
\end{eqnarray*}
where $f(t,x) = 1_{ t > a} 1_C (x) .$

Now, we make use of the following very useful formula which is taken from
H\"opfner and L\"ocherbach (2003), (5.29), page 59. 
\begin{eqnarray*} &&E_x \left( \sum_{ n \geq 1} ( 1 - 1_C ( X_{T_1 })) \cdots ( 1 - 1_C ( X_{T_{n-1} })) f( X_{T_n}, T_n) \right) \\
&&= E_x \left( \int_0^\infty f(t, X_t) e^{ - \int_0^t 1_C (X_s ) ds } dt \right) \\
&&= E_x \left( \int_a^\infty 1_C (X_t) e^{ - \int_0^t 1_C (X_s ) ds } dt \right)  .
\end{eqnarray*}
Hence we obtain
$$ \P_x (\tilde S_1 > a) =E_x \left( \int_a^\infty 1_C (X_t) e^{ - \int_0^t 1_C (X_s ) ds } dt \right) = E_x \left( e^{ - \int_0^a 1_C (X_s ) ds } \right) . $$
Writing finally that 
$$ \E_x \int_0^{\tilde S_1} r(s) ds = \E_x \int_0^\infty 1_{ s < \tilde S_1} r(s) ds =
\int_0^\infty r(s) \P_x (\tilde S_1 > s ) ds  , $$ 
we get (\ref{eq:firstly}). 
No we apply once more formula (5.29) of H\"opfner and L\"ocherbach (2003) and obtain
\begin{equation}\label{eq:modmoment4}
 E_x \int_0^\infty e^{- \int_0^t 1_C ( X_s ) ds } r(t) dt  = E_x \left( \sum_{n=1}^\infty (1 - 1_C ( \bar X_1)) \cdots (1 - 1_C ( \bar X_{n-1})) r( T_n) \right) ,
 \end{equation}
where we recall that $\bar X_n = X_{T_n}$ is the process observed at the $n-$th jump time of an independent rate one Poisson process. 
The expression at the right hand side of (\ref{eq:modmoment4}) is almost a modulated moment for the resolvent chain, except that we have to replace
$r (T_n) $ by $r(n) .$ This is not difficult since for $n$ large we can use the law of large numbers. Since $r$ is increasing we can write 
\begin{eqnarray}\label{eq:modmoment5}
&&E_x \left(  (1 - 1_C ( \bar X_1)) \cdots (1 - 1_C ( \bar X_{n-1})) r( T_n) \right) \\
&&\le E_x \left(  (1 - 1_C ( \bar X_1)) \cdots (1 - 1_C ( \bar X_{n-1})) r( 2n) \right)
\nonumber \\&&
\quad \quad \quad \quad \quad + E_x \left(  (1 - 1_C ( \bar X_1)) \cdots (1 - 1_C ( \bar X_{n-1})) 1_{ T_n > 2 n } r( T_n) \right) .
\end{eqnarray}
Let us start with the control of the first term in this decomposition. Recall that $\bar \tau_C = \inf \{ n \geq 1  : \bar X_n \in C \} .$ Now, 
using that $ r(2n) \le c r(n),$ which follows from $r(t+s) \le c ( r(t) + r(s))$ by (\ref{eq:subadditive}), 
\begin{eqnarray}\label{eq:modmoment6}
&&\E_x \left( \sum_{n=1}^\infty (1 - 1_C ( \bar X_1)) \cdots (1 - 1_C ( \bar X_{n-1})) r( 2 n) \right) =  
\E_x \left( \sum_{n=1}^{\bar \tau_C} r(2n) \right)  \nonumber \\
&& \le  c \E_x \left( \sum_{n=1}^{\bar \tau_C} r(n) \right)\le c \E_x \left( \sum_{n=1}^{\bar \tau_C- 1} r(n) \right) + c \E_x r ( \bar \tau_C)  . 
\end{eqnarray}
Let $R (k) = \sum_{ j = 0}^{ k - 1} r (j).$ Since $r$ is polynomial,  $ \lim_{ k \to \infty } r(k)/R(k) = 0.$ Hence there exists
a constant $c$ such that for all $ k \geq 1, $ $ r(k) \le R(k) + c .$ As a consequence,
$$ \E_x r ( \bar \tau_C) \le c + \E_x \left( \sum_{n=0}^{\bar \tau_C- 1} r(n) \right) .$$
Using (\ref{eq:modmomentres}), we can thus conclude that 
$$  \E_x \left( \sum_{n=1}^\infty (1 - 1_C ( \bar X_1)) \cdots (1 - 1_C ( \bar X_{n-1})) r( 2 n) \right) \le  c_1 V (x) + c_2 
 .$$ 
Now we turn to the second expression in (\ref{eq:modmoment5}) above: For any $ 1 \le p, q $ such that $\frac1p + \frac1q = 1,$ 
\begin{eqnarray}\label{eq:important1}
&&\E_x \left(  (1 - 1_C ( \bar X_1)) \cdots (1 - 1_C ( \bar X_{n-1})) 1_{ T_n > 2 n } r( T_n) \right)  \nonumber \\
&& \le \left[\E_x r^p ( T_n) \right]^{1/p} \cdot \left[\P_x (T_n > 2 n )\right]^{1/q}\nonumber \\
&& \le \left[\E_x r^p ( T_n) \right]^{1/p} \cdot e^{-C n} 
\end{eqnarray}
for some suitable constant $C.$ But $r^p (\cdot) $ is polynomial and $T_n$ the sum of $n$ independent $\exp (1) $ variables, hence
$ \sup_x \E_x r^p ( T_n) \le P(n) ,$ where $P(.) $ is a polynomial in $n.$ As a consequence,
$$ \sum_{n \geq 1}  \sup_{x } \E_x \left(  (1 - 1_C ( \bar X_1)) \cdots (1 - 1_C ( \bar X_{n-1})) 1_{ T_n > 2 n } r( T_n) \right) = C_2 < \infty .$$
Putting together (\ref{eq:firstly}), (\ref{eq:modmoment4})--(\ref{eq:important1}), we thus get that 
\begin{equation}\label{eq:firstresult}
 \E_x \int_0^{\tilde S_1} r(s) ds \le c_1 V(x)  + c_2 .
\end{equation} 
This will be the main contribution to the control of $\E_x \int_0^{R_1} r(s) ds .$ In the sequel, we shall also use that (\ref{eq:firstresult}) implies in
particular   
\begin{equation}\label{eq:inpart}
\sup_{ x \in C}  \E_x \int_0^{\tilde S_1} r(s) ds < + \infty ,
\end{equation}
since $C$ is compact.\\
2. Recall the definition of $S_1$ in (\ref{eq:regtimes}). We now show how to use the control of $\tilde S_1$ in order to obtain a control of $S_1 .$ We have, since $ r(t+s ) \le r(s) r(t),$
\begin{eqnarray}\label{eq:zweiterterm}
\E_x \int_0^{S_1} r(s) ds & = & \E_x \int_0^{\tilde S_1} r(s) ds + \sum_{n \geq 1} \E_x \left( \int_{\tilde S_n}^{\tilde S_{n+1}} r (s) ds 1_{ \tilde S_n < S_1} \right)\nonumber \\
& = & \E_x \int_0^{\tilde S_1} r(s) ds + \sum_{n \geq 1} \E_x \left(  \int_{0}^{\tilde S_{n+1}- \tilde S_n} r (\tilde S_n + s) ds 1_{ \tilde S_n < S_1} \right)\nonumber \\
&\le & \E_x \int_0^{\tilde S_1} r(s) ds \nonumber\\
&& +  \sum_{n \geq 1} \E_x \left(\left[  \int_{0}^{\tilde S_{n+1}- \tilde S_n}  \! \! \! \! \! r (s) ds\right]  r( \tilde S_n)  1_{ \tilde S_n < S_1} \right).
\end{eqnarray}
The first term in this expression can be controlled using (\ref{eq:firstresult}). We study the second term in the above expression 
$$ \E_x \left( r( \tilde S_n)  1_{ \tilde S_n < S_1} \int_{0}^{\tilde S_{n+1}- \tilde S_n}  \! \! \! \! \! r (s) ds 
  \right) .$$ 
We know that  
$ \P_x ( \tilde S_n < S_1 ) = (1 - \alpha )^n $ (see for example the proof of Proposition 2.16 in L\"ocherbach and Loukianova (2008)). A first idea would be to use Markov's property with respect to $\tilde S_n : $ 
$$  \E_x \left(  r( \tilde S_n)  1_{ \tilde S_n < S_1} \int_{0}^{\tilde S_{n+1}- \tilde S_n}  \! \! \! \! \! r (s) ds 
   \right)= \E_x \left(r( \tilde S_n)  1_{ \tilde S_n < S_1}  \E_{ Z_{\tilde S_n}} \int_0^{\tilde S_1} r(s) ds  \right) .$$
But unfortunately it is {\bf not true} that 
$$ \E_{ Z_{\tilde S_n}} \int_0^{\tilde S_1} r(s) ds  \le \sup_{x \in C} \E_x \int_0^{\tilde S_1} r(s) ds ,$$
we only have that on $\{ \tilde S_n < S_1 \} ,$
$$ \E_{ Z_{\tilde S_n}} \int_0^{\tilde S_1} r(s) ds  \le \sup_{x \in C, u > \alpha , z \in E } \E_{(x, u, z) }  \int_0^{\tilde S_1} r(s) ds ,$$
and this can not be directly controlled using (\ref{eq:firstresult}). 

Hence, we must be more careful. We use that $ r(\tilde S_n) 1_{\{ \tilde S_n < S_1 \}}  $ is measurable with respect to ${\cal G}_{\tilde S_n}$ where we recall that $({\cal G}_t)_t$ is the filtration generated by the first two co-ordinates $Z^1 $ and $Z^2 $ of $Z.$ Hence we will condition on ${\cal G}_{\tilde S_n}.$ Note that by construction of $Z,$ this means that we condition on the whole history of the whole process, i.e. the three co-ordinates, up to the last jump time $\sup \{ T_k : T_k < \tilde S_n\} $ strictly before $\tilde S_n,$ and on the history of $Z^1 $ and $Z^2 $ up to time $\tilde S_n.$  In other words, conditioning on ${\cal G}_{\tilde S_n},$ we know $Z^1_{\tilde S_n} $ and $Z^2_{\tilde S_n} ,$ but $Z^3_{\tilde S_n} $ has still to be chosen. Moreover, on $ \{ \tilde S_n < S_1 \},$ $Z^2_{\tilde S_n} > \alpha ,$ and hence the second line in the definition of the kernel $Q((x,u), dx') $ of (\ref{Q}) has to be applied. 

Write $\nu (x)$ for the density of $ \nu (dx)$ with respect to the dominating measure $\Lambda (dx) $ of assumption \ref{regularitypt}. Then,
\begin{eqnarray}\label{eq:wichtig2}
&&\E_x \left( r (\tilde S_n )  1_{ \tilde S_n < S_1} \int_{0}^{\tilde S_{n+1}- \tilde S_n} r (s) ds  \right) \nonumber \\
&& = 
\E_x \left(r (\tilde S_n ) 1_{ \tilde S_n < S_1}  \int_{E} \frac{1}{1 - \alpha} \left[ u^1 ( Z^1_{\tilde S_n}, x') - \alpha \nu (x') \right] \Lambda (dx')  \right.\nonumber \\
&&\quad \quad \quad \quad \quad \quad  \quad \quad \quad \quad \quad \quad \quad \quad \quad \left. 
\E_{ (Z^1_{\tilde S_n}, Z^2_{\tilde S_n}, x')} \int_0^{\tilde S_1} r(s) ds 
\right) \nonumber  \\
&& \le \frac{1}{1 - \alpha} \E_x \left(r (\tilde S_n ) 1_{ \tilde S_n < S_1}  \int_{E}  u^1 ( Z^1_{\tilde S_n}, x')   \Lambda (dx') 
\right. \nonumber \\
&&\quad \quad \quad \quad \quad \quad  \quad \quad \quad \quad \quad \quad \quad \quad \quad\left. 
\E_{ (Z^1_{\tilde S_n}, Z^2_{\tilde S_n}, x')} \int_0^{\tilde S_1} r(s) ds 
\right) .
\end{eqnarray}
But for any $x,u,$
\begin{multline}\label{eq:wichtig3}
 \int_{E}  u^1 ( x , x')   \Lambda (dx') \E_{(x,u,x')} \int_0^{\tilde{S_1}} r(s) ds \\
= \int_0^1 du \int_E Q((x,u), dx') \E_{(x,u,x')} \int_0^{\tilde{S_1}} r(s) ds    ,
\end{multline} 
since $ \E_{(x,u,x')} \int_0^{\tilde{S_1}} r(s) ds $ does not depend on $u.$ Moreover, 
$$ \int_0^1 du \int_E Q((x,u), dx') \E_{(x,u,x')} \int_0^{\tilde{S_1}} r(s) ds   = \E_x \int_0^{\tilde{S_1}} r(s) ds .$$
Hence, since $ Z^1_{\tilde S_n} \in C, $  
\begin{eqnarray}\label{eq:auchauch}
&&\E_x \left( r (\tilde S_n )  1_{ \tilde S_n < S_1} \int_{0}^{\tilde S_{n+1}- \tilde S_n} r (s) ds  \right) \\
&& \le  \frac{1}{1 - \alpha} \E_x \left(r (\tilde S_n ) 1_{ \tilde S_n < S_1}  
  \left( \sup_{ x \in C} \E_x \int_0^{\tilde S_1} r(s) ds  \right)  \right) \nonumber \\
&& \le  \frac{c}{1 - \alpha}   \E_x \left(r (\tilde S_n ) 1_{ \tilde S_n < S_1}   \right). 
\end{eqnarray}  

Hence we must control $ \E_x (  1_{ \tilde S_n < S_1} r ({\tilde S_n} ) ) .$
We write $\tilde S_n  = \tilde S_1 + (\tilde S_n- \tilde S_1)  $
and use once more the sub-multiplicativity of $r.$ 
We obtain 
\begin{equation}
\E_x \left( r(\tilde S_n) 1_{ \tilde S_n < S_1}\right) \le  \E_x \left(   r(\tilde S_1)1_{ \tilde S_1 < S_1} r(\tilde S_n - \tilde S_1) 1_{ \tilde S_n < S_1}\right)   .
\end{equation}
Here, we have cut $ \tilde S_n = \tilde S_1 + ( \tilde S_n - \tilde S_1) $ into two pieces in order
to get a last term which does not depend on the starting point. The same arguments as above in (\ref{eq:wichtig2})
and (\ref{eq:wichtig3}) yield, when conditioning on ${\cal G}_{\tilde S_1} ,$ the following. 
\begin{eqnarray}\label{eq:first1}
&& \E_x \left( r(\tilde S_n) 1_{ \tilde S_n < S_1}\right)  \le \E_x \left(   r(\tilde S_1)1_{ \tilde S_1 < S_1} r(\tilde S_n - \tilde S_1) 1_{ \tilde S_n < S_1}\right) \nonumber \\
&&\le    \E_x \!\!\left(\!\! r( \tilde S_1) 1_{ \tilde S_1 < S_1} \frac{1}{1 - \alpha}\!\! \int_E \!\!u^1 ( Z_{\tilde S_1}^1, x') \Lambda (dx') \E_{( Z_{\tilde S_1}^1, Z_{\tilde S_1}^2, x')} [ r( \tilde S_{n-1}) 1_{ \tilde S_{n-1} < S_1 } ]  \right) \nonumber \\
&&\le  \frac{1}{1 - \alpha } \E_x \left(   r( \tilde S_1) 1_{ \tilde S_1 < S_1} \, \E_{ Z_{\tilde S_1}^1}  [ r( \tilde S_{n-1}) 1_{ \tilde S_{n-1} < S_1 } ]  \right) \nonumber\\
&& \le  \frac{1}{1 - \alpha } \sup_{ y \in C} \E_y  \left( r( \tilde S_{n-1}) 1_{ \tilde S_{n-1} < S_1 } \right) \;   \E_x \left(   r( \tilde S_1) 1_{ \tilde S_1 < S_1} \right) .
\end{eqnarray}
Concerning the last term in the above expression, we use that $r (t) \le  \int_0^t r(s) ds + c $ for some constant $c$ and obtain
\begin{eqnarray}\label{eq:auchauchauch}
\E_x \left( r( \tilde S_1) 1_{ \tilde S_1 < S_1} \right) & \le &c +  \E_x \left( \int_0^{\tilde S_1} r(s) ds  \right) \nonumber \\
& \le &  c +  c_1 V(x) + c_2  =  c_1 V(x) + \tilde c_2  ,
\end{eqnarray}
using (\ref{eq:firstresult}).

Concerning the first term in (\ref{eq:first1}), for $p, q \geq 1 $ such that $\frac1p + \frac1q = 1,$ 
we obtain
\begin{eqnarray}\label{eq:horrible}
\sup_{y \in C}  \E_y \left( r(\tilde S_{n-1}) 1_{ \tilde S_{n-1} < S_1 } \right)   &\le& \sup_{y \in C} \left( \E_y  r^p(\tilde S_{n-1} )\right)^{1/p} \P_y ( \tilde S_{n-1} < S_1)^{1/q}    \nonumber  \\
& \le   &  (1- \alpha)^{(n-1)/q} \left(  \sup_{y \in C} \E_y  r^p(\tilde S_{n-1} ))\right)^{1/p}  .
\end{eqnarray}
We have to control this last expression. We claim the following: There exists a $\kappa > 0 $ and a constant $c$ such that 
for $p > 1 $ sufficiently small, 
\begin{equation}\label{eq:claim}
\left(  \sup_{y \in C} \E_y  r^p(\tilde S_{n-1} ))\right)^{1/p} \le c n^\kappa .
\end{equation}
Once (\ref{eq:claim}) is proven, we obtain, using (\ref{eq:zweiterterm}), (\ref{eq:auchauch}), (\ref{eq:first1}), 
(\ref{eq:auchauchauch}), (\ref{eq:horrible}) and (\ref{eq:claim}) the following: 
\begin{eqnarray}\label{eq:uffuff}
&&\E_x \int_0^{S_1} r(s) ds \nonumber \\
&& \le  (c_1 V(x) + c_2 ) + \frac{c}{(1 - \alpha)^2} ( c_1 V(x) + \tilde c_2) \sum_{n \geq 1} (1- \alpha)^{(n-1)/q} n^\kappa \nonumber  \\
&& = \bar{c}_1 V(x) + \bar{ c}_2 .
\end{eqnarray}

It remains to show (\ref{eq:claim}): 
By our assumptions, $r$ is polynomial and $ r(x) \sim C x^{\frac{\alpha}{1 - \alpha}} $ as $x \to \infty ,$ hence $r^p (x) \le c x^{\kappa p } ,$ where $ \kappa = \alpha / ( 1 - \alpha) .$  We now fix the choice of $p$ and $q$ in (\ref{eq:horrible}). We choose 
$$p \in \; ] \frac{1}{\kappa}, 1 + \frac{1}{\kappa}[ \; = \; 
] \frac{1 - \alpha}{\alpha}, \frac{1}{\alpha }[ .$$ 
Then $ \kappa p \geq 1 ,$ and we use Jensen's inequality to obtain
\begin{equation}\label{eq:important2}
  r^p(\tilde S_{n-1} ) \le c \tilde S_{n-1} ^{\kappa p } 
  \le (n- 1)^{p\kappa - 1} \left( \tilde S_1^{\kappa p} + \ldots + (\tilde S_{n-1} - \tilde S_{n-2})^{\kappa p} \right) .
\end{equation}

Now since $p < 1 + \frac{1}{\kappa} = \frac{1}{\alpha} ,$ we have $t^{\kappa p}  \le c \int_0^t r(s) ds $ for some constant $c. $ Then for any of the above terms ($k \geq 2$), by (\ref{eq:inpart}),
$$ \sup_{ y \in C} \E_y (\tilde S_k - \tilde S_{k-1})^{\kappa p} \le c \sup_{ y \in C} \E_y \int_0^{\tilde S_1} r(s) ds < \infty .$$
As a consequence, coming back to (\ref{eq:important2}),
$$ \sup_{y \in C} \E_y  r^p ( \tilde S_{n-1} )  \le c ( n-1) ^{p \kappa} \sup_{ y \in C} \E_x \int_0^{\tilde S_1} r(s) ds = \tilde c ( n-1) ^{p \kappa}  ,$$
and this yields (\ref{eq:claim}).

3. Finally we proceed to the control of $R_1 .$ Clearly, 
$$ \E_x \int_0^{R_1} r(s) ds \le \E_x \int_0^{S_1} r(s) ds + \E_x\left[  r(S_1)  \int_0^{R_1 - S_1} r(s) ds \right] .$$
We have to control the last term above. We condition on ${\cal G}_{S_1},$ notice that $ Z^2_{S_1} \le \alpha  $ and use step 1. of the construction of $Z,$ hence
\begin{eqnarray*}
&& \E_x\left[  r(S_1)  \int_0^{R_1 - S_1} r(s) ds \right] \\
&&= \E_x\left[  r(S_1) \left( \int_E  \nu (x')\Lambda (dx')  \int_0^\infty e^{-t} \frac{p_t (Z_{S_1}^1, x')}{u^1 (Z_{S_1}^1, x') } dt  \int_0^{t} r(s) ds  \right) \right] .
\end{eqnarray*}
But by (\ref{minoration}), $ \nu (x') \le \frac{1}{\alpha}   u^1 (Z_{S_1}^1, x'),$ since $ Z_{S_n}^1 \in C,$ thus 
\begin{eqnarray*} && \E_x\left[  r(S_1)  \int_0^{R_1 - S_1} r(s) ds \right] \\
&&\le \frac{1}{\alpha}  \E_x\left[  r(S_1) \left( \int_E \Lambda (dx') \int_0^\infty e^{-t} p_t (Z_{S_1}^1, x')dt  \int_0^{t} r(s) ds  \right) \right]  \\
&&=\frac{1}{\alpha}  \E_x\left[  r(S_1) \int_0^\infty e^{-t}dt  \left(  \int_E p_t (Z_{S_1}^1, x') \Lambda (dx') \right)  [\int_0^{t} r(s) ds] \right] \\
&&=\frac{1}{\alpha}  \E_x\left[  r(S_1) \int_0^\infty e^{-t} dt \int_0^{t} r(s) ds  \right] \\
&&= \frac{c}{\alpha} \E_x (r(S_1)),
\end{eqnarray*}
since $\int_0^\infty e^{-t}\int_0^{t} r(s) ds dt < \infty .$ Finally, $r(t) \le \int_0^t r(s) ds + c $ gives  
$$ \E_x (r(S_1)) \le \E_x \int_0^{S_1} r(s) ds + c ,$$ which is controlled due to (\ref{eq:uffuff}). 
This concludes the proof. 
\end{proof}

\begin{rem}
The fact that the rate function is polynomial was crucial at two points in the above proof: in equations (\ref{eq:important1}) and (\ref{eq:claim}). The general sub-geometrical case could probably be handled by paying in particular attention to the constants that arrive in expressions like $ \E_x r^p (T_n) \le [\E_x r^p (T_1)]^n .$ 
\end{rem}

\section{Polynomial deviation inequality}\label{lastsection}

We impose Assumption \ref{ass:driftcond} with a function $\Phi (v) = c v^\alpha, $ where $ 0 \le \alpha < 1.$ 
As a consequence, we obtain a control for polynomial moments $\E_x R_1^p $ of the regeneration time for all $ p \le 1/ (1- \alpha),$
due to (\ref{eq:wichtig}). Since $V$ is continuous and since the measure $\nu $ of (\ref{minoration}) which is used in order
to construct the regeneration periods is of compact support, also $\E_\nu R_1^p$ is finite for all $ p \le 1/ (1- \alpha).$ 

In order to derive the deviation inequality we first state a deviation inequality for the counting process associated to the life cycle decomposition 
$$  N_t = \sup \{ n : R_n \le t \}=\sum_{n=1}^{\infty} 1_{\{ R_n \le t \}} , \; N_0 = 0 . $$ 
We have almost surely, as $t \to \infty, $ $N_t / t \to \E_\Pi N_1 = \ell ,$ where we recall that
$$ \ell = (\E_\nu R_1)^{- 1} = ( \E (R_2 - R_1))^{-1} ,$$
see Proposition \ref{prop:harris} and equation (\ref{eq:fixedmu}).

The deviation inequality for the counting process associated to the life cycle decomposition is the following. 

\begin{theo}\label{theo:deviationnt}
Grant Assumptions \ref{regularitypt} and \ref{ass:driftcond} with $\Phi (v) = c v^\alpha ,$ $0 \le \alpha < 1.$
Let $x \in E$ be any starting point and $0< \ge <  1. $  
Then for any $1 < p \le  1/ (1 - \alpha ) $ there exists a positive constant $C(\ell,p ,\nu)$ such that 
the following inequality holds:
\begin {equation}
\P_{x}\left(|\frac{N_t}{t}-\ell|>\ell\ge\right )\leq 
C(\ell,p, \nu) V(x) \; 
\frac1{\ge^p \wedge \ge^{ 2 (p-1)}}  \; \frac1{t^{ p-1}} .
\end{equation}
Here  $C(\ell,p,\nu)$ is given by 
$$C(\ell,p,\nu) =
\left\{ 
\begin{array} {ll}
C(p) \left[ (1 + (1/\ell )^{p- 1})+  (m_p  + \sigma^{2(p-1)} ) \right]    \, \left( \ell^{p-1} \vee \ell \right) & \mbox{ if } p \geq 2\\
C(p) \left[ ( 1 + (1/\ell )^{p-1}) + m_p \ell C_p^p \right]   &   \mbox{ if } p \in ] 1, 2 [ 
\end{array}
\right\} ,
$$
where $C(p)$ is a constant depending only on $p,$ $C_p$ is the constant of the Burkholder-Davis-Gundy inequality, $m_p = \E ( R_2 - R_1 - \frac{1}{\ell} )^p $ and $\sigma^2 = Var ( R_2 - R_1 ),$ in the case $p \geq 2 .$ 
\end{theo}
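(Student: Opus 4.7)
The plan is to reduce the deviation of $N_t$ to a classical deviation problem for a sum of i.i.d.\ random variables, by exploiting the identity
$$\{N_t \geq n\} = \{R_n \leq t\}, \qquad \{N_t < n\} = \{R_n > t\},$$
and then to apply the Fuk--Nagaev inequality recalled in the appendix to the centered partial sums of the regeneration periods.

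\textbf{Step 1 (reduction).} Set $n_+ := \lceil(1+\varepsilon)\ell t\rceil$ and $n_- := \lfloor(1-\varepsilon)\ell t\rfloor$. Then
$$\left\{\tfrac{N_t}{t}-\ell>\ell\varepsilon\right\} = \{R_{n_+}\leq t\}, \qquad \left\{\tfrac{N_t}{t}-\ell<-\ell\varepsilon\right\} = \{R_{n_-}>t\}.$$
For $t\leq t_0(\ell,\varepsilon)$ the right-hand side of the claimed inequality exceeds $1$, so assume $t$ is large enough that $n_-\geq 2$ and $|t-n_\pm/\ell|\geq \varepsilon t/2$. By Proposition~\ref{iid} part~(d) together with the construction of $Z$, the increments $(R_k-R_{k-1})_{k\geq 2}$ are i.i.d.\ under $\P_x$, with common mean $1/\ell$ (see Proposition~\ref{prop:harris}). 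Writing
$$R_n - \tfrac{n-1}{\ell} = R_1 + S_{n-1}, \qquad S_{m} := \sum_{k=2}^{m+1}\!\Big(R_k - R_{k-1} - \tfrac{1}{\ell}\Big),$$
we obtain for $n\in\{n_+,n_-\}$ and a constant $c>0$
$$\{R_{n_+}\leq t\}\cup\{R_{n_-}>t\}\subset\{R_1\geq c\varepsilon t\}\cup\{|S_{n-1}|\geq c\varepsilon t\}.$$

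\textbf{Step 2 (initial segment).} By Theorem~\ref{theo:polymoment}, in particular (\ref{eq:wichtig}), we have $\E_xR_1^p\leq \tilde c_1V(x)+\tilde c_2$ for all $p\leq 1/(1-\alpha)$. Markov's inequality then yields
$$\P_x(R_1\geq c\varepsilon t)\leq \frac{C(p)V(x)}{(\varepsilon t)^p}\leq \frac{C(p)V(x)}{\varepsilon^{p}\wedge\varepsilon^{2(p-1)}}\cdot\frac{1}{t^{p-1}},$$
using $t\geq 1$ and $\varepsilon\leq 1$. This is absorbed into the final bound.

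\textbf{Step 3 (centered i.i.d.\ sum).} It remains to control $\P_x(|S_{n-1}|\geq c\varepsilon t)$ with $n-1\asymp\ell t$. For $p\geq 2$, the Fuk--Nagaev inequality from the appendix gives
$$\P(|S_m|\geq y)\leq C_p\,\frac{m\,m_p}{y^p}\;+\;C_p\exp\!\left(-\frac{c\,y^2}{m\,\sigma^2}\right).$$
Plugging $m=n-1\asymp\ell t$ and $y=c\varepsilon t$, the first term becomes $\leq C_p\ell m_p/(\varepsilon^p t^{p-1})$. For the second, I use the elementary bound $e^{-u}\leq C_p u^{-(p-1)}$ for $u\geq 0$, which transforms the exponential into $C_p(\sigma^2/\varepsilon^2 t)^{p-1}=C_p\sigma^{2(p-1)}/(\varepsilon^{2(p-1)}t^{p-1})$. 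Since $\varepsilon<1$ and $2(p-1)\geq p$ for $p\geq 2$, $\varepsilon^{-p}\leq \varepsilon^{-2(p-1)}$, so both terms combine into $C_p(m_p+\sigma^{2(p-1)})(\ell^{p-1}\vee\ell)/(\varepsilon^{2(p-1)}t^{p-1})$, matching the first case of the stated constant. For $1<p<2$, I use instead the Marcinkiewicz--Zygmund inequality with BDG constant $C_p$, yielding $\E|S_m|^p\leq C_p^p\,m\,m_p$, and Markov's inequality gives
$$\P(|S_{n-1}|\geq c\varepsilon t)\leq \frac{C_p^p\,\ell\,m_p}{\varepsilon^p t^{p-1}},$$
which matches the second case since $\varepsilon^p\wedge\varepsilon^{2(p-1)}=\varepsilon^p$ when $p<2$.

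\textbf{Step 4 (conclusion).} Summing the bounds from Steps~2 and~3 gives the stated inequality with the two different constants $C(\ell,p,\nu)$. The main obstacle is the $p\geq 2$ case: one must apply the correct form of Fuk--Nagaev (with the polynomial/exponential splitting) and convert the exponential term into a polynomial decay at the rate $\varepsilon^{-2(p-1)}t^{-(p-1)}$ rather than the naive rate $\varepsilon^{-p}t^{-p/2}$ coming from Rosenthal's inequality alone; the $\varepsilon^{-2(p-1)}$ factor in the statement is precisely the signature of this step. The rest is bookkeeping of the constants $m_p$, $\sigma^2$ and $\ell$ arising from the two regimes $p\geq 2$ and $1<p<2$.
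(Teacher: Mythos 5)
Your overall architecture is the same as the paper's: split off the initial period $R_1$, reduce to a centred partial sum of regeneration lengths, and attack that sum with Fuk--Nagaev (for $p\geq 2$) respectively a Burkholder-type moment bound (for $1<p<2$), with $R_1$ handled by Markov using $\E_x R_1^p\lesssim V(x)$. However, there is a genuine gap: you assert that the increments $(R_k-R_{k-1})_{k\geq 2}$ are i.i.d.\ under $\P_x$, but they are not. As the paper stresses, $\bar\eta_k=-(R_{k+1}-R_k-1/\ell)$ is not independent of $\mathcal{F}_{R_k}$ (only of $\mathcal{F}_{R_{k-1}}$), because of step~1 of the construction of $Z$: the waiting time for the jump following a visit to the small set is drawn depending on the current value of $Z$. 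Proposition~\ref{iid}(d) only gives that $(Z_{R_n})_n$ is i.i.d., and part~(c) only gives independence of $Z_{R_n+\cdot}$ from $\mathcal{F}_{S_n-}$, not from $\mathcal{F}_{R_n}$. The increments are therefore $2$-dependent, not independent.

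This matters asymmetrically in your two cases. For $p\geq 2$ you happen to be fine, because the Fuk--Nagaev inequality stated in the appendix (Theorem~\ref{theo:fuknagaev}) is explicitly proved for $2$-dependent identically distributed centred variables via the $\alpha$-mixing form of Rio's theorem; so even though you justify the hypothesis incorrectly (and rewrite the inequality in an exponential-plus-polynomial form that you then re-polynomialise, whereas the appendix already gives the polynomial form directly), the conclusion is correct. For $1<p<2$, however, your appeal to Marcinkiewicz--Zygmund/BDG requires independent increments (or at least a martingale-difference structure), and that fails here: $S_m$ is not a martingale with respect to the natural filtration because of the $2$-dependence. The paper's proof handles this precisely by splitting $\sum\bar\eta_k$ into the sub-sums over odd and over even indices, each of which is a genuine $L^p$-martingale, and then applying BDG together with subadditivity of $x\mapsto x^{p/2}$ to each. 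Without this even/odd splitting, your bound $\E|S_m|^p\leq C_p^p\,m\,m_p$ is not justified. You should also note that the first increment $R_1$ (started from $x$, not from $\nu$) is not identically distributed with the others, which is why the paper centres with $\bar\eta_k$ only for $k\geq 1$ and treats $R_1$ separately; your indexing is consistent with this, but the i.i.d.\ claim obscures the distinction.
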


\begin{proof}
The proof is basically the same as in L\"ocherbach, Loukianova 
and Loukianov (2011), proof of Theorem 3.1. Put in contrary to there we use the Fuk-Nagaev inequality given in the appendix (Theorem \ref{theo:fuknagaev}) in the case $ p \geq 2 .$ We decompose:
\begin{equation}\label{Ntfirst}
\P_{x}\left(|N_t/t-\ell|>\ell\ge\right )
\leq \P_{x}\left(N_t/t>\ell(1+\ge)\right )+\P_{x}\left(N_t/t<\ell(1-\ge)\right ) .
\end{equation}

Put for $k\geq 1,$
$\bar{\eta}_k=-1(R_{k+1}-R_{k}-1/\ell).$
For the first term of (\ref{Ntfirst}), we have 
\begin{equation}\label{Ntfirst1}
\P_{x}\left(N_t/t>\ell(1+\ge)\right )
\leq \P_{x}\left(R_1- 1/\ell \leq -t\ge/2\right)+
\P_{x}\left(\sum_{k=1}^{[t\ell (1+\ge)]}\bar{\eta}_k \geq t\ge/2\right) .
\end {equation}

In an analogous way,  
\begin{equation}\label{Ntfirst2}
\P_{x}\left(N_t/t<\ell (1-\ge)\right )\leq \P_{x}\left(R_1-\frac1\ell \geq t\ge/2\right)
+\P_{x}\left(\sum_{k=1}^{[t\ell (1-\ge)]-1}\bar{\eta}_k\leq -t\ge/2\right).
\end{equation}
The random variables $\bar{\eta}_k, k \geq 1, $ are identically distributed centred random variables  such that $\E_{x}|\bar{\eta}_k|^p <\infty .$ Moreover, they are two-dependent. Indeed, $\bar \eta_k$ is not independent of ${\cal F}_{R_k} ,$ but only independent of 
${\cal F}_{R_{k-1}} .$ This is due to step 1 of the construction of $Z,$ where the waiting
time for the new jump is chosen depending on the actual value of $Z$ at time $R_k.$ 

If $p \geq 2 ,$ we can apply Theorem \ref{theo:fuknagaev}. Let $M_0=0$ and $M_n=\sum_{k=1}^{n} \bar{\eta}_k.$
Denote $M^*_n=\sup_{k\leq {n}}|M_k| .$  
As a consequence of (\ref{Ntfirst1}) and (\ref{Ntfirst2}) we can write
\begin{equation}\label{Ntsecond}
\P_{x}\left(|N_t/t-\ell |>\ell \ge\right )
\leq  \P_{x}\left(|R_1-1/\ell |\geq t\ge/2\right )
+ \P_{x}\left( M^*_{[t\ell (1+\ge)]}\geq t\ge/2\right)  .
\end{equation}

We use Theorem \ref{theo:fuknagaev} with $ n = [t \ell  ( 1 +\ge)] $ and $\lambda = t \ge / 8 $ and obtain 
\begin {eqnarray*}
&&\P_{x}\left(|N_t/t-\ell |>\ell \ge\right )\\
&& 
\le   \frac{2^{p- 1}\E_{x}|R_1-1/\ell |^{p- 1}}{(t\ge)^{p- 1}}+
 \; C ( p) [m_p + \sigma^{2(p-1)}]    \, \left( \ell^{p-1} \vee \ell \right) \ge^{ - 2 (p-1)} t^{- (p-1)}  \\
& &\leq  \left( 2^{p- 1}\E_{x}|R_1-1/\ell |^{p- 1}+ C ( p) [m_p + \sigma^{2(p-1)}]   \, \left( \ell^{p-1} \vee \ell \right) \right) \frac1{\ge^{ 2 (p-1)} }t^{- (p-1)} ,
\end {eqnarray*}
since $\ge < 1,$ where $m_p =  \E_{x}|\bar{\eta}_1|^p,$ $\sigma^2 = Var (\bar{\eta}_1 ).$ 
Finally we use that 
$$ \E_x |R_1-1/\ell |^{p- 1} \le C(p) [ \E_x R_1^{p-1} + (1/\ell )^{p-1} ] , $$
and that for some constants $c$ and $d,$
$$ \E_x (R_1^{p-1}) \le 1 + \E_x R_1^p \le c V(x) + d  $$
to conclude that, since $V(.) \geq 1,$
\begin{multline*}
 \P_{x}\left(|N_t/t-\ell |>\ell \ge\right ) \le C(p) \, V(x) (  (1 + (1/\ell )^{p- 1}) + \\
 [m_p + \sigma^{2(p-1)}]   \, \left( \ell ^{p-1} \vee \ell \right) ) \frac1{\ge^{ 2 (p-1)} }t^{- (p-1)} .
\end{multline*}
This finishes the proof in the case $ p \geq 2.$ 

In the case $ 1 < p < 2,$ we apply the Burkholder-Davis-Gundy inequality. In order to produce independent random variables, we define
\begin{equation}\label{eq:xi1}
 \eta^{(1)}_k = \left\{
\begin{array}{ll}
\bar{\eta}_k & \mbox{ if $k$ odd} \\
0 & \mbox{ elseif }
\end{array}
\right\} , \; \eta^{(2)}_k = \left\{
\begin{array}{ll}
\bar{\eta}_k & \mbox{ if $k$ even} \\
0 & \mbox{ elseif }
\end{array}
\right\} .
\end{equation}
Let $M^1_0=0$ and $M^1_n=\sum_{k=1}^{n} \eta^{(1)}_k.$ In the same way, $M^2_0 = 0$ and 
$M^2_n = \sum_{ k=1}^n \eta^{(2)}_k .$ 

We also introduce the following two sub-filtrations, 
associated to the sum of odd and the sum of even terms. 
Let
$$ {\cal A}_n^{(1)} := \sigma \{  \eta_k^{(1)} : k \le n , k \mbox{ odd } \} = \sigma \{ M_k^{(1)} , k \le n \} , $$
and 
$$ {\cal A}_n^{(2)} := \sigma \{ \eta_k^{(2)} : k \le n , k \mbox{ even } \}= \sigma \{ M_k^{(2)} , k \le n \} .  $$
Then $(M^1_n)_n$ and $(M^2_n)_n$ are discrete
$ {\cal A}^{(1)}_{n}-$martingales  ($ {\cal A}^{(2)}_{n}-$martingales, respectively). 
Both martingales are $L^p$ martingales such that $[M^{(i)}]_n=\sum_{k=1}^n (\eta^{(i)}_k)^2,$ for $ i = 1, 2.$ Denote $(M^{(i)})^*_n=\sup_{k\leq {n}}|M^{(i)}_k|,$ $i = 1, 2.$ 
In this case, as a consequence of (\ref{Ntfirst1}) and (\ref{Ntfirst2}) we write
\begin{eqnarray}\label{Ntsecondbis}
&&\P_{x}\left(|N_t/t-\ell |>\ell \ge\right )
\leq  \P_{x}\left(|R_1-1/\ell |\geq t\ge/2\right ) \\
&&+ \P_{x}\left( (M^{(1)})^*_{[t\ell (1+\ge)]}\geq t\ge/4\right) + \P_{x}\left( (M^{(2)})^*_{[t\ell (1+\ge)]}\geq t\ge/4\right) . \nonumber
\end{eqnarray}

We use the Burkholder-Davis-Gundy inequality to bound the last terms in (\ref{Ntsecondbis}):  
For all $p>1$ there exists a constant $C_p$ depending only $p$ such that $\|(M^{(i)})_n^*\|_p\leq C_p\|[M^{(i)}]_n^{1/2}\|_p,$  hence
 $\E_{x}((M^{(i)})_n^*)^p\leq C_p^p\E_{x}\left (\sum_{k=1}^n (\eta^{(i)}_k)^2\right)^{p/2}.$

Notice that by definition, the term $\sum_{k=1}^n (\eta^{(1)}_k)^2 $ contains $ [ \frac{n+1}{2}] $ terms whereas 
$\sum_{k=1}^n (\eta^{(2)}_k)^2 $ contains $ [ n/2] $ terms. 
Since $1<p<2,$ the sub-additivity of the function $x \mapsto x^{p/2}$ implies 
\begin{equation}\label{BGD21}
  \left(\sum_{k=1}^{n} (\eta^{(1)}_k)^2\right)^{p/2 }\le 
  \sum_{k=1}^{n}|\bar{\eta}^{(1)}_k|^p ,\quad\mbox{hence}\quad \E_{x} ((M^{(1)})_n^*)^p\leq C_p^p 
  n \E|\bar{\eta}_1|^p.
  \end{equation}
The same kind of bound holds also for the even terms. 

Now we can conclude similarly to L\"ocherbach, Loukianova and Loukianov (2011):   For $1<p<2, $ 
\begin {eqnarray*}
&&\P_{x}\left(|N_t/t-\ell |>\ell \ge\right ) \\
&& \le   \frac{2^{p- 1}\E_{x}|R_1-1/\ell |^{p- 1 }}{(t\ge)^{p- 1 }}+
2 \; 4 ^p C^p_p  \E_{x}|\bar{\eta}_1|^p \,[t\ell (1+\ge)] \frac{1}{(t\ge)^{p}} 
\\
&& \leq \left( 2^{p- 1}\E_{x}|R_1-1/\ell |^{p- 1 }+2^{2p+2 }C^p_p  \E_{x }|\bar{\eta}_1|^p  \, \ell \right) \frac1{\ge^p}\frac{1}{t^{p-1}}\\
&& \le C(p) V(x) \left( ( 1 + (1/\ell )^{p-1}) + m_p \ell C_p^p \right) \frac1{\ge^p}\frac{1}{t^{p-1}} .
\end {eqnarray*}
This concludes the proof. 
\end{proof}

Once the deviation inequality for the counting process $(N_t)_t$ is proven, we obtain on the lines of L\"ocherbach, Loukianova and Loukianov (2011), Theorem 3.2, the following general 
deviation inequality for additive functionals of the original Markov process $X,$ built of bounded functions. 

\begin{theo}\label{mainbounded}
Grant Assumptions \ref{regularitypt} and \ref{ass:driftcond} with $\Phi (v) = c v^\alpha ,$ $0 \le \alpha < 1.$ 
Put $p = 1/ (1 - \alpha ) .$ Let $f \in L^1(\mu)$. Suppose that $\|f\|_\infty <\infty$.
Let $x$ be any initial point and $0< \ge <  \|f\|_\infty  . $ Then for all $t\geq 1$  the following inequality holds:
\begin{eqnarray}\label{polbounded2}
&& P_{x}\left(\left|\frac 1t\int_0^tf(X_s)ds-\mu(f)\right|>\ge\right)\leq 
K(\ell ,p, \nu,X) \, V(x) \, t^{- (p-1) } \times \nonumber \\
&& 
\quad \quad \quad \quad \quad \quad \quad \quad \quad  \quad \quad \quad \times \left\{
\begin{array}{ll}
 \frac 1{\ge^{2(p-1)}}  \| f\|_\infty^{ 2 (p-1)}   & \mbox{ if } p \geq 2 \\
 \frac 1{\ge^p}\|f\|_\infty ^p & \mbox{ if }1< p < 2
\end{array}
\right\}. 
\end{eqnarray}

Here $K(\ell , p, \nu,X)$ is a positive constant, different in the two cases, which depends on $\ell ,p ,\nu$
and on the process $X$ through the life cycle decomposition, but which does not depend on $f$, $t$, $\ge.$
\end{theo}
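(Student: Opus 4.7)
The strategy is to reduce this deviation bound to the one for the counting process $N_t$ already established in Theorem \ref{theo:deviationnt}, via the life-cycle decomposition at the regeneration times $R_k$. The scheme is the continuous-time analogue of L\"ocherbach, Loukianova and Loukianov (2011), Theorem 3.2. Set $Y_k := \int_{R_k}^{R_{k+1}} f(X_s)\,ds$ and $\bar Y_k := Y_k - \mu(f)/\ell$. By (\ref{eq:fixedmu2}) and Proposition \ref{iid}, the $\bar Y_k$ are centred, identically distributed, and two-dependent; moreover $|\bar Y_k| \le \|f\|_\infty(R_{k+1}-R_k + 1/\ell)$, so polynomial moments up to order $p = 1/(1-\alpha)$ are controlled through (\ref{eq:wichtig}). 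On $\{N_t \ge 1\}$,
\begin{equation*}
\frac{1}{t}\int_0^t f(X_s)\,ds - \mu(f) = A_1 + A_2 + A_3 + A_4,
\end{equation*}
where $A_1 := t^{-1}\int_0^{R_1} f(X_s)\,ds$, $A_2 := t^{-1}\sum_{k=1}^{N_t-1}\bar Y_k$, $A_3 := \mu(f)\bigl((N_t-1)/(t\ell) - 1\bigr)$, and $A_4 := t^{-1}\int_{R_{N_t}}^t f(X_s)\,ds$; the event $\{R_1 > t\}$ is absorbed into the bound for $A_1$.

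The terms $A_1$, $A_3$ and $A_4$ are boundary terms. For $A_1$, Markov's inequality at order $p$ combined with $\E_x R_1^p \le c_1 V(x) + c_2$ from (\ref{eq:wichtig}) yields the required rate. For $A_3$, since $|\mu(f)|\le\|f\|_\infty$, the event $|A_3|>\varepsilon/4$ implies $|N_t/t - \ell|>\ell\varepsilon/(4\|f\|_\infty)$, to which Theorem \ref{theo:deviationnt} applies directly at the rescaled level $\varepsilon/\|f\|_\infty$, producing the claimed $\|f\|_\infty^{2(p-1)}$ and $\|f\|_\infty^p$ factors. For $A_4$, using $|A_4|\le\|f\|_\infty(R_{N_t+1}-R_{N_t})/t$, split on $\{N_t\le n_0\}$ and its complement with $n_0 := \lfloor 2\ell t\rfloor$: on $\{N_t\le n_0\}$ use a union bound over $1\le k\le n_0$ together with $\P_x(R_{k+1}-R_k>t\varepsilon/(4\|f\|_\infty))\le C\|f\|_\infty^p(t\varepsilon)^{-p}$, while the complement is absorbed by Theorem \ref{theo:deviationnt} applied with $\varepsilon = 1$.

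The principal term is $A_2$. First replace the random upper limit $N_t-1$ by $n_0$ at the cost of $\P_x(N_t>n_0)$, again absorbed by Theorem \ref{theo:deviationnt}. Then split $\sum_{k=1}^{n_0}\bar Y_k$ into its odd- and even-indexed sub-sums as in (\ref{eq:xi1}); by the regeneration property of Proposition \ref{iid}, each sub-sum is a partial sum of i.i.d.~centred random variables and thus a true martingale in an appropriate sub-filtration. For $p\ge 2$, apply Doob's maximal inequality followed by Fuk--Nagaev (Theorem \ref{theo:fuknagaev}): the heavy-tail part contributes $\|f\|_\infty^p/(\varepsilon^p t^{p-1})$, while the Gaussian part, after converting the exponential bound into a polynomial one via Markov, contributes $\|f\|_\infty^{2(p-1)}/(\varepsilon^{2(p-1)}t^{p-1})$, which dominates under the restriction $\varepsilon<\|f\|_\infty$. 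For $1<p<2$, apply Burkholder--Davis--Gundy together with the sub-additivity $(\sum x_k^2)^{p/2}\le\sum|x_k|^p$ to obtain the factor $\|f\|_\infty^p/(\varepsilon^p t^{p-1})$.

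The main technical obstacle is the coordinated handling of the random time $N_t$, the maximum over $n_0$ indices, and the two-dependence of the $\bar Y_k$, all while tracking the precise exponents of $\|f\|_\infty$ and $\varepsilon$; in particular one must be careful that the variance $\sigma^2$ entering Fuk--Nagaev scales as $\|f\|_\infty^2$ rather than any larger power, as otherwise the Gaussian contribution would produce a suboptimal exponent. All dependence on the initial point $x$ funnels through (\ref{eq:wichtig}) and Theorem \ref{theo:deviationnt}, so the final constant $K(\ell,p,\nu,X)$ aggregates $c_1,c_2$ from Theorem \ref{theo:polymoment}, the constants $m_p,\sigma^2,C_p,C(\ell,p,\nu)$ from Theorem \ref{theo:deviationnt}, and universal constants from Doob, Fuk--Nagaev and Burkholder--Davis--Gundy.
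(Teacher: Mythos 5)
Your proposal is correct and follows essentially the same route as the paper: a life-cycle decomposition into boundary terms, a counting-process fluctuation term absorbed by Theorem~\ref{theo:deviationnt}, and a main block sum normalised by $\|f\|_\infty$ and controlled by Fuk--Nagaev (for $p\geq 2$) or Burkholder--Davis--Gundy with the even/odd split (for $1<p<2$). The only cosmetic differences are that the paper centres the integrand ($\xi_k=\int_{R_k}^{R_{k+1}}(f-\mu(f))(Z^1_s)\,ds$, absorbing the $N_t$-fluctuation into the event $\Omega_t$) rather than subtracting the deterministic constant $\mu(f)/\ell$ as you do, and that the Doob step you mention is unnecessary since Theorem~\ref{theo:fuknagaev} is already stated in maximal form.
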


\begin{rem}
The above result holds for any starting measure $\nu $ such that $ \E_\nu (R_1^p ) $ is finite, so a fortiori for any measure $ \nu $ such that $ V \in L^1 (\nu ) .$ In contrary to most of the existing results in the literature (see e.g. Cattiaux and Guillin (2008)) we do not need to assume absolute continuity of the initial law of the process with respect to the invariant measure $\mu .$ 
\end{rem}

\begin{proof}
First of all, since the law of $X$ starting from a fixed point $x$ is the same as the law of $Z^1 $ starting 
from the initial measure $\P_x,$ we certainly have that 
$$ P_{x}\left(\left|\frac 1t\int_0^tf(X_s)ds-\mu(f)\right|>\ge\right) = \P_x \left(\left|\frac 1t\int_0^tf( Z^1_s)ds-\mu(f)\right|>\ge\right) .$$
Now for $p < 2$ the rest of the proof is exactly the same as the proof of Theorem 3.2 in L\"ocherbach, Loukianova
and Loukianov (2011). The only difference compared to there is that the variables $\xi_n = \int_{R_n}^{R_{n+1}}  (f - \mu (f)) (Z_s^1) ds $
are no longer independent but only $2$-dependent. Hence, the same trick as in the proof of Theorem \ref{theo:deviationnt} applies: one
has to separate even and odd terms. But this does only change the constants in the upper bound. 

For $p \geq 2,$ we use the Fuk-Nagaev inequality again. We start as in the proof of Theorem 3.2 of L\"ocherbach, Loukianova and Loukianov
 (2011). Denote 
$$
\Omega_t=\left\{ \left|\frac{N_t}t-\ell \right|\leq \ell \delta  \right\} , \delta = \ge /\| f\|_\infty < 1 .
$$
Put $\bar f := f - \mu (f) .$ Then 
\begin{eqnarray*}
&& \P_{x}\left(\left|\int_0^tf(Z^1_s)ds-t\mu(f)\right|>t \ge\right)
\nonumber\\
&& \leq\P_{x }\left(\left|\int_0^{R_1}\bar f (Z^1_s)ds\right|>\frac{t\ge }3\right)
+\P_{x}\left(\left|\int_{R_1}^{R_{N_{t}+1}}\bar f (Z^1_s)ds \right|>\frac{t\ge }{3}\ ; \Omega_t\right)\nonumber \\
 &&+\P_{x }\left(\left|\int_t^{R_{N_{t}+1}}\bar f (Z^1 _s)ds\right|>\frac{t\ge }{3}; \Omega_t\right)
+\P_{x}\left(\Omega_t^c\right)\nonumber \\
&&=A+B+C+D . 
\end{eqnarray*}
The terms $A$ and $C$ are handled as in L\"ocherbach, Loukianova and Loukianov (2011). Term $D$ is controlled thanks to Theorem \ref{theo:deviationnt}. So
the main term that has to be controlled is the term $B,$ and we have
$$ B \le \P_{x}\left(\sum_{k=1}^{[t\ell (1+\delta )]} |\xi_k|  \geq t\ge/3 \right), \; \xi_k = \int_{R_k}^{R_{k+1}} \bar f (Z_s^1) ds . $$
Put $\bar{\xi}_k = \frac{1}{\| f\|_\infty } \xi_k, $ then $\bar{\xi}_k = \int_{R_k}^{R_{k+1}} ( g - \mu (g)) (Z_s^1 ) ds ,$ where $g = f / \| f\|_\infty, $ $ \| g\|_\infty = 1 .$ We write
\begin{eqnarray*}
 \P_{x}\left(\sum_{k=1}^{[t\ell (1+\delta )]} |\xi_k|  \geq t\ge/3 \right) &=& \P_{x}\left(\sum_{k=1}^{[t\ell (1+\delta )]} |\bar{\xi}_k|  \geq \frac{t\ge }{3 \|f\|_\infty }  \right)\\
&\le & \P_{x}\left( \sup_{ k \le [t\ell (1+\delta )]} S_k \geq t \delta /3 \right) , \; \delta = \ge / \| f\|_\infty < 1, 
\end{eqnarray*}
$ S_k =  \sum_{i- 1}^k |\bar{\xi}_k| ,$ 
and apply the Fuk-Nagaev inequality of Theorem \ref{theo:fuknagaev} with $n = t \ell ( 1 + \delta) $ and $ \lambda = t \delta / 12 .$ 
This gives the following upper bound 
$$ \P_{x}\left(\sum_{k=1}^{[t\ell (1+\delta )]} |\xi_k|  \geq t\ge/3 \right)  \le C ( p) [ m_p + \sigma^{2(p-1)}]   \, \left( \ell^{p-1} \vee \ell \right) \delta^{ - 2 (p-1)} t^{- (p-1)} ,$$
where 
$$ m_p = \E ( | \bar{\xi}_1|^p ) \le 2^p \, \E ( (R_2- R_1)^p) , \sigma^2 = Var ( \bar{\xi}_1) \le 4 \; \E ( (R_2 - R_1)^2 .$$
Therefore, since $ \delta = \ge / \| f \|_\infty ,$ 
there exists a constant $ K( \ell , p , \nu , X) $ depending only on the process and the life cycle decomposition, but not on the function $f,$ such that 
$$  \P_{x}\left(\sum_{k=1}^{[t\ell (1+\delta )]} |\xi_k|  \geq t\ge/3 \right)  \le K( \ell , p , \nu , X) \| f\|_\infty^{ 2 (p-1)}  \ge^{ - 2 (p-1)} t^{ - (p-1)} . $$
This finishes the proof. 
 
\end{proof}

\section{Examples}
We close our paper with two examples where the above deviation inequalities can be applied. 
\subsection{Multi-dimensional diffusions}
Consider the solution of the following stochastic differential equation in $\RR^d$ 
$$ d X_t = b(X_t) dt + \sigma (X_t) d W_t,$$
where $W_t$ is an $n-$dimensional Brownian motion, $n \geq d,$ such that $ b$ is a 
locally bounded Borel measurable function $\RR^d \to \RR^d$ and $\sigma $ is a bounded
continuous function $\RR^d \to \RR^{d \times n } $ which is uniformly elliptic: Writing $a := \sigma \sigma^*,$ we suppose that there exists $\varepsilon > 0$ such that 
$$ < a(x) \xi , \xi >  \; \geq \varepsilon \| \xi \|^2 $$
for all $x \in \RR^d .$ Classical results on lower bounds for transition densities of diffusions 
(see for instance Kusuoka and Stroock (1987)) imply that in this case any compact set of $\RR^d $ is petite.  
We cite the following recurrence conditions from Fort and Roberts (2005). Suppose there exist $M, \beta , \gamma > 0 $ and $ l < 2 $ such that 
$$ \sup_{x : \|x\| > M} \|x\|^{-(2 +l) } <x, a(x) x > = \beta , \quad  \sup_{x : \|x\| > M} \|x\|^{-l } tr (a(x)) = \gamma , $$
$$ \sup_{x : \|x\| > M} \|x\|^{-l } < b(x) , x > = -r , \mbox{ for some } r > ( \gamma - \beta l ) /2 .$$
We choose
$$\kappa \in \; ] 0, l + \frac{2r - \gamma}{\beta} [ $$
and put $m = 2 - l + \kappa ,$ thus $2 - m = l - \kappa .$ 
Let $V(x) = \|x\|^{m} $ outside a compact set. Then $ \sup_{x : \|x\| > M} {\cal A} V (x) < \infty ,$ and standard calculus shows that
for all $\|x\| > M,$
$$  {\cal A} V (x) \le m  \left( -r + \frac12 [\gamma + ( m - 2) \beta] \right) \frac{V(x)}{\|x\|^{2-l}}.$$
Then by our choice of $\kappa, $ $\tilde r :=   r -  \frac12 [\gamma + ( m - 2) \beta] > 0 .$ 
Hence for $\|x\| > M,$
$$  {\cal A} V (x) \le - \Phi  \circ V (x) , $$
where 
$$ \Phi (x) = m \tilde r \,  x^{1 - \alpha }, \quad \mbox{ with } \alpha = \frac{2 - l}{m } < 1 . $$
Hence we get polynomial moments of regeneration times up to the order $ m / (2- l ) = 1 + \kappa / ( 2 - l ) .$

\subsection{Solutions to SDE's driven by a jump noise}
This chapter is inspired by a recent work of Kulik (2009) on exponential ergodicity for solutions to SDE's driven by a jump noise. More precisely, consider the solution of the following stochastic differential equation on $\RR^d $ driven by a jump
noise
\begin{equation}\label{eq:jumpnoiseSDE}
d X_t = b (X_t) dt + \int_{\| u \| \le 1} c( X_{s-}, u) \tilde \mu (dt, du)  + \int_{ \| u\| > 1 } c( X_{s-}, u) \mu (dt, du) .
\end{equation}
Here $\mu$ is a Poisson random measure (PRM) on $ \RR_+ \times \RR^q ,$ having compensator $ \hat \mu (dt, du) = dt \nu (du),$
and $ \tilde \mu (dt, du) = \mu (dt, du) - dt \nu (du) $ denotes the compensated PRM. We follow Kulik (2009) and 
impose the following conditions on the 
coefficients $b$ and $c.$ The drift function $b$ belongs to $C^1 ( \RR^d, \RR^d )$ and satisfies a linear growth condition. 
The jump rate $c(x,u)$ is one times continuously differentiable with respect
to $x.$ Moreover, 
$$ \| c(x,u) - c( y,u) \| \le K ( 1 + \| u \| ) \| x-y\| , \quad \| c(x,u) \| \le \psi (x) \| u\| , \; x,y \in \RR^d , u \in \RR^q ,$$
where $K$ is some constant and where $\psi : \RR^d \to \RR_+ $ satisfies a linear growth condition. Finally we impose a 
moment condition on the L\'evy measure $\nu.$ For all $R > 0 ,$ 
$$ \int \sup_{ x : \| x \| \le R }  \left( \| c (x,u) \| + \| \nabla_x c(x,u) \| \right) \nu (du) < + \infty .$$
Then for any fixed $x \in \RR^d,$ there exists a unique strong solution $X_t$ to (\ref{eq:jumpnoiseSDE}), which is a strong Markov process, having c\`adl\`ag trajectories. 

We quote sufficient conditions implying that compact sets are petite from Kulik (2009). 
For this sake, we have to introduce some notation. 
Let ${\cal S}^q = \{ v \in \RR^q : \| v \| = 1\} $ be the unit sphere in $\RR^q.$ 
For any $w \in {\cal S}^q$ and for any $ \varrho \in ]0,1[, $ let $V_+ (w, \varrho) = \{ y \in \RR^q : \; < y, w > \; \geq \varrho \| y \| \} , $ 
and $ V ( w, \varrho) = \{ y \in \RR^q : | < y,w>| \; \geq \varrho \| y \| \} .$ Then Kulik (2009) obtains the following result. 

\begin{prop}[Kulik 2009]\label{prop:kulik}
Suppose that the following assumptions hold.
\begin{enumerate}
\item {Cone condition:} 
For every $ w \in {\cal S}^q,$ there exists $ \varrho \in ]0, 1[, $ such that for every $\delta > 0,$ 
$$ \nu \left( V(w,\varrho) \cap \{ u : \| u \| \le \delta \} \right) > 0 .$$
\item {Non-degeneracy condition:} 
There exists a point  $x_* \in \RR^d $ and a neighbourhood $O_* $ of $x_*$ such that $ c(x,u) = \chi (x) u + \delta (x,u) ,$ for all $x \in O_*,$ and 
$$ \| \delta (x_* , u ) \| + \| \nabla_x \delta (x_* , u) \| = o ( \| u\| ) , $$
as $ \| u \| \to 0 . $ Moreover, the functions $ \tilde b (.) = b (.) - \int_{ \| u \| \le 1 } c (., u) \nu (du) $ and $\chi $ are one 
times continuously differentiable and satisfy the joint non-degeneracy condition 
$$ rank \left( \nabla \tilde b (x_*) \chi (x_*) - \nabla \chi (x_* ) \tilde b (x_*) \right) = d .$$ 
\item {Support condition:}
For any $R > 0$ there exists $t$ such that for all $x$ with $\|x\| \le R, $ 
$$ x_* \in supp P_t (x, \cdot ).$$
\end{enumerate}
If the above conditions hold, then any compact set is petite. 
\end{prop}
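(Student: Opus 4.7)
The plan is to combine a local minorization of $P_{s_0}(x,\cdot)$ near the distinguished point $x_*$ with the support condition, so as to produce, for any compact $K \subset \RR^d$, a uniform minorization of the form
$$P_{t_K+s_0}(x,dy) \ge \eta\, 1_A(y)\, dy \qquad \text{for all } x \in K,$$
which is exactly the petiteness condition (\ref{eq:petite}) with $a = \delta_{t_K+s_0}$ and $\nu_a(dy) = \eta\, 1_A(y)\, dy$.

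The first step, which I expect to be the main obstacle, is to establish the local minorization: there exist $s_0 > 0$, a neighborhood $U$ of $x_*$, a Borel set $A \subset \RR^d$ of positive Lebesgue measure, and a constant $c>0$ such that $P_{s_0}(x,dy) \ge c\, 1_A(y)\, dy$ for every $x \in U$. This is where the cone condition and the non-degeneracy condition enter, and the argument should proceed via a Bismut-type stochastic calculus of variations for the jump SDE (\ref{eq:jumpnoiseSDE}), in the spirit of Bichteler--Gravereaux--Jacod and in the form developed by Kulik (2009). The cone condition plays the role of an ellipticity assumption for the small-jump component: it ensures that arbitrarily small jumps are available in every direction of $\RR^q$. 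Combined with the non-degeneracy of $\chi(x_*)$ and the Hörmander-type bracket condition $\mathrm{rank}\bigl(\nabla\tilde b(x_*)\chi(x_*)-\nabla\chi(x_*)\tilde b(x_*)\bigr)=d$, this will furnish, at some small time $s_0 > 0$, a transition density $p_{s_0}(x,y)$ that is jointly continuous in $(x,y)$ and strictly positive at $(x_*,x_*)$. A standard continuity argument then yields the minorization on a product neighborhood $U \times A$ of $(x_*,x_*)$.

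The second step is to pass from this local property at $x_*$ to a uniform bound on $K$. Choose $R$ so that $K \subset \{\|x\|\le R\}$ and let $t_K$ be the time provided by the support condition, so that $x_* \in \operatorname{supp} P_{t_K}(x,\cdot)$ for every $x \in K$; in particular $P_{t_K}(x,U) > 0$ pointwise on $K$. To upgrade this into a uniform bound $\inf_{x\in K} P_{t_K}(x,U) \ge \delta_K > 0$, I would use the Feller continuity of $(P_t)$, which follows from the Lipschitz structure imposed on $b$ and $c(\cdot,u)$, together with a compactness argument: around each $x \in K$ select an open neighborhood on which $P_{t_K}(\cdot,U)$ stays bounded below, then extract a finite subcover of $K$.

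The third step is then immediate from Chapman--Kolmogorov: for every $x \in K$ and every Borel set $B$,
$$P_{t_K+s_0}(x,B) = \int P_{t_K}(x,dz)\,P_{s_0}(z,B) \;\ge\; \int_U P_{t_K}(x,dz)\cdot c\,|A\cap B| \;\ge\; c\,\delta_K\,|A\cap B|,$$
giving the claimed uniform minorization and hence petiteness with $\eta = c\delta_K$. The substantive difficulty is clearly Step 1: producing a positive, continuous transition density at $(x_*,x_*)$ for a jump SDE under the mixed drift--jump Hörmander-type condition. Steps 2 and 3 are routine once the local minorization is in hand.
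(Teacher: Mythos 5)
Your proposal takes a genuinely different route from the paper's. The paper's proof is essentially a citation: it quotes Theorem 1.3 and Propositions 4.3--4.4 of Kulik (2009) to conclude that the uniform Dobrushin coupling condition
\[
\inf_{x,y\,:\,\|x\|,\|y\|\le R}\ \int \left[P_{t^*}(x,\cdot)\wedge P_{t^*}(y,\cdot)\right](dz) \;>\; 0
\]
holds for every $R>0$ with some $t^*=t^*(R)$, and then asserts that this Dobrushin estimate implies that every compact set is petite. You instead build the petiteness minorization directly, in three steps: a local minorization $P_{s_0}(x,dy)\ge c\,1_A(y)\,dy$ for $x$ in a neighborhood $U$ of $x_*$; a uniform lower bound $\inf_{x\in K}P_{t_K}(x,U)\ge\delta_K>0$ obtained from the support condition together with Feller continuity and compactness; and the Chapman--Kolmogorov chaining $P_{t_K+s_0}(x,dy)\ge c\,\delta_K\,1_A(y)\,dy$ for all $x\in K$, which is (\ref{eq:petite}) with $a=\delta_{t_K+s_0}$. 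Both proofs ultimately outsource the hard analytical step --- local regularity and positivity of $P_{s_0}$ near $x_*$ under the cone and non-degeneracy conditions --- to the Malliavin-type calculus for jump SDEs developed in Kulik (2009), and you are right that this is where the substance lies. What your route buys is transparency: the passage to petiteness is made completely explicit, whereas the paper leaves implicit the step from a Dobrushin overlap bound to an $x$-independent minorizing measure, which is a standard but not entirely trivial implication (a uniform overlap with a \emph{varying} comparison kernel does not by itself produce a single minorizing measure without an extra continuity or $T$-chain argument). What you pay for it is that your Step 1 commits to a somewhat stronger intermediate statement than what Kulik's cited results literally assert (a jointly continuous transition density strictly positive at $(x_*,x_*)$, rather than a Dobrushin bound); this is indeed what the underlying Malliavin machinery delivers, but it is worth flagging that you are importing that stronger form. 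Your Step 2 reasoning is correct: for open $U$ Feller continuity makes $x\mapsto P_{t_K}(x,U)$ lower semicontinuous, and a pointwise-positive lower semicontinuous function attains a positive minimum on the compact $K$.
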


\begin{rem}
1. In the one-dimensional case $ d= q = 1,$ the above conditions can be stated in a simpler way. For example, condition 1. can then be written as follows: For all $\delta > 0, $ $ \nu ( u : 0 < \| u \| \le \delta ) > 0 .$ \\
2. Simon (2000), Theorem I, gives a sufficient condition for condition 3. above to hold, see also Proposition 4.7 in Kulik (2009).
\end{rem}

\begin{proof}
Theorem 1.3, Proposition 4.3 and Proposition 4.4 of Kulik (2009) show that under the above conditions, the following Dobrushin condition holds: For all $R> 0 ,$ there exists $ t^* = t^* (R) $ such that 
\begin{equation}\label{eq:ld} \inf_{ x,y : \| x\|, \|y\| \le R } \int \left[ P_{t^* } (x, \cdot) \wedge  P_{t^* } (y, \cdot) \right] (dz) > 0 ,
\end{equation}
where for any two probability measures $P$ and $Q,$ 
$$ [ P \wedge Q ] (dz) := \left( \frac{dP}{d (P+Q)} (z) \wedge \frac{dQ}{d (P+Q)} (z) \right) (P+Q) (dz) .$$
From this the claim follows since (\ref{eq:ld}) implies that any compact set is a petite set. 
\end{proof}

It remains to give conditions that are sufficient for the recurrence condition (\ref{eq:driftcond}), (\ref{nice}) respectively. There is a wide
range of possible conditions and in what follows we restrict attention to a particular sufficient condition which is stated in the same spirit
as the conditions of Proposition 4.1 of Kulik (2009). 

\begin{prop}
Suppose that the conditions of Proposition \ref{prop:kulik} hold. Suppose moreover that there exist $M , \gamma > 0 $ and $0 < l < 1 $ such that  
\begin{enumerate}
\item Moment-condition: 
There exists $ m \geq 1  $ such that 
$ \int_{ \| u \| \geq 1 } \| u \|^m \nu (du) < \infty .$
\item Moderate jumps: 
The function $c$ can be decomposed into $ c= c_1 + c_2 $ such that 
\begin{enumerate}
\item
$ \| c_1 (x,u) \| \le \gamma \| x\|^{l}  \| u\| , u \in \RR^q, \| x\| > M .$ 
\item
$ \| x + c_2 (x,u) \| \le \| x\| , $ $\|u \| > 1, $ $\|x\| > M ,$ and $ c_2 (\cdot, u) = 0 , $ if $\| u\| \le 1 .$  
\end{enumerate}
\item Drift-condition:
$ \sup_{ x : \| x\| > M } \| x\|^{-(1+l) } < b(x) , x> = -r , $ for some constant $r $ satisfying $r >  2 \gamma \int_{ \| u\| > 1 } \|u\|^m \nu (du) .$ 
\end{enumerate}

Then there exists $M_0 \geq M$ such that (\ref{nice}) holds with $B = \{ x : \|x\| \le M_0 \},$ $B$ petite, $V(x) = \|x\|^m $ and 
$\Phi (x) = c x^{1- \alpha } ,$ where $ \alpha = \frac{1-l}{m} < 1 .$   
\end{prop}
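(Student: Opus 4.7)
The plan is to verify the pointwise inequality (\ref{nice}) for the extended generator ${\cal A}$ applied to $V(x)=\|x\|^m$, outside a large ball $B=\{\|x\|\le M_0\}$; Proposition~\ref{prop:kulik} guarantees that such a $B$ is petite, and on $B$ the continuity of $b,c$ together with moment assumption (1) yields ${\cal A}V$ bounded, so the excess on $B$ can be absorbed into $b\cdot 1_B(x)$.

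For $\|x\|>M$ I would write ${\cal A}V(x)=D(x)+I_1(x)+I_2(x)$, with drift term $D(x)=m\|x\|^{m-2}\langle b(x),x\rangle$ and with $I_1,I_2$ the compensated small-jump ($\|u\|\le 1$) and uncompensated large-jump ($\|u\|>1$) integrals, recalling $c_2(\cdot,u)=0$ on $\|u\|\le 1$ so that $c=c_1$ there. Assumption (3) immediately delivers $D(x)\le -mr\,\|x\|^{m+l-1}$, which has precisely the order $\Phi(V(x))=c\,\|x\|^{m+l-1}$ we wish to dominate, since $\alpha=(1-l)/m$ gives $m(1-\alpha)=m+l-1$. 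A second-order Taylor expansion, combined with $\|c_1\|\le\gamma\|x\|^l\|u\|$ and $\|\nabla^2V(y)\|\le C\|y\|^{m-2}$, bounds $I_1(x)$ by $O(\|x\|^{m+2l-2})\,\int_{\|u\|\le 1}\|u\|^2\,\nu(du)$, which is of strictly lower order than $\|x\|^{m+l-1}$ because $l<1$.

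The main obstacle is the large-jump integral $I_2$. I would split
\[
V(x+c_1+c_2)-V(x)=\bigl[V(x+c_1+c_2)-V(x+c_2)\bigr]+\bigl[V(x+c_2)-V(x)\bigr],
\]
and use assumption (2)(b) to discard the (non-positive) second bracket. The first bracket is estimated by the mean-value inequality $m\int_0^1\|x+c_2+tc_1\|^{m-1}\|c_1\|\,dt$. I would split the $u$-integral at the cross-over $\|u\|=\|x\|^{1-l}/\gamma$, corresponding to $\|c_1\|\le\|x\|$ versus $\|c_1\|>\|x\|$: on the low-$u$ region one has $\|x+c_2+tc_1\|\le 2\|x\|$, yielding a contribution of order $\gamma\,\|x\|^{m+l-1}\int_{\|u\|>1}\|u\|^m\,\nu(du)$ (using $\|u\|\le\|u\|^m$ for $\|u\|>1$, $m\ge 1$); on the high-$u$ region $\|x+c_2+tc_1\|\le 2\|c_1\|\le 2\gamma\|x\|^l\|u\|$, giving a term bounded by $\gamma^m\,\|x\|^{ml}\int_{\|u\|>\|x\|^{1-l}/\gamma}\|u\|^m\,\nu(du)$, which is $o(\|x\|^{m+l-1})$ by the moment condition (1), since $ml<m+l-1$ whenever $m>1$ (and the two regions coincide in behaviour when $m=1$).

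Assembling the three estimates yields
\[
{\cal A}V(x)\le -\bigl[mr-C\gamma\textstyle\int_{\|u\|>1}\|u\|^m\,\nu(du)\bigr]\,\|x\|^{m+l-1}+o(\|x\|^{m+l-1}),
\]
where $C$ is a dimensional constant coming from the mean-value step. The hypothesis $r>2\gamma\int\|u\|^m\,\nu(du)$ provides exactly the slack (the factor $2$) needed to absorb $C$ and keep the bracketed coefficient strictly positive; choosing $M_0\ge M$ large enough to kill the lower-order remainder gives ${\cal A}V(x)\le -c\,\|x\|^{m+l-1}=-\Phi(V(x))$ for $\|x\|>M_0$. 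Combined with the boundedness of ${\cal A}V$ on $B$, this establishes (\ref{nice}) with the stated $\Phi$ and $\alpha=(1-l)/m$.
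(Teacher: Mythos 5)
Your overall structure mirrors the paper's: same generator formula, same three-term decomposition into drift, compensated small-jump and uncompensated large-jump contributions, same drift bound $D(x)\le -mr\|x\|^{m+l-1}$, same second-order Taylor argument showing the small-jump term is $O(\|x\|^{m-2+2l})=o(\|x\|^{m+l-1})$, and the same use of assumption 2(b) to discard the bracket $V(x+c_2)-V(x)\le 0$.

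The gap is in your treatment of the large-jump integral, and it is a quantitative one that breaks the conclusion when $m>2$. You split the $u$-integral at the crossover $\|c_1\|\le\|x\|$ and on the low-$u$ region bound $\|x+c_2+tc_1\|^{m-1}\le(2\|x\|)^{m-1}=2^{m-1}\|x\|^{m-1}$. Together with $\|c_1\|\le\gamma\|x\|^l\|u\|$ and $\|u\|\le\|u\|^m$ this produces a coefficient $m\,2^{m-1}\gamma\int_{\|u\|>1}\|u\|^m\,\nu(du)$ on $\|x\|^{m+l-1}$, and your requirement on $r$ then becomes $r>2^{m-1}\gamma\int_{\|u\|>1}\|u\|^m\,\nu(du)$. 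That is strictly stronger than the stated hypothesis $r>2\gamma\int_{\|u\|>1}\|u\|^m\,\nu(du)$ as soon as $m>2$, so the "factor 2 of slack'' is not in fact enough to absorb your constant $C$: your $C$ is $2^{m-1}$, not a dimensional constant. The paper avoids this loss with a multiplicative factorization that exploits $\|u\|>1$: it bounds $\|x\|+\gamma\|x\|^l\|u\|\le\|x\|\,\|u\|\,(1+\gamma\|x\|^{l-1})\le\|x\|\,\|u\|\,(1+\gamma M_0^{l-1})$ for $\|x\|>M_0$, giving $(\|x(u)\|+\|c_1\|)^{m-1}\le\|x\|^{m-1}\|u\|^{m-1}(1+\gamma M_0^{l-1})^{m-1}$ with a coefficient that tends to $1$ (not $2^{m-1}$) as $M_0\to\infty$; no split of the $u$-integral is needed and no tail of the L\'evy measure enters. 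Your argument could be repaired by placing the crossover at $\|c_1\|\le\varepsilon\|x\|$, which makes the low-$u$ coefficient $(1+\varepsilon)^{m-1}$, and checking (as you essentially do) that the high-$u$ region is still $o(\|x\|^{m+l-1})$; but as written the step does not close under the stated hypothesis.
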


\begin{proof}
We use the drift condition for the generator defined for all functions 
$F$ in the extended domain of the generator
\begin{multline*}
 {\cal A} F(x) = < b(x), \nabla F (x) > + \int_{\RR^q} ( F( x + c(x,u)) - F(x) - 1_{ \{ \| u \| \le 1 \}} \\
< \nabla F (x), c(x,u) > ) \nu (du) .
\end{multline*}
Applying this to $V (x) = \| x\|^m$ yields for all $\| x\| > M,$
\begin{eqnarray}\label{eq:uff}
&&{\cal A} V(x) = m < b(x), x >  \| x\|^{m-2} + \int_{\| u \| > 1 } \left( \| x + c(x,u) \|^m - \|x\|^m  \right) \nu (du) \nonumber \\
& & + \int_{\| u \| \le  1 } \left( \| x + c(x,u) \|^m - \|x\|^m- m  < x, c(x,u) > \|x\|^{m-2}\right) \nu (du) \nonumber \\
&&\le  -m\cdot r  \| x\|^{m-1 +l} + \int_{\| u \| > 1 } \left( \| x + c(x,u) \|^m - \|x\|^m  \right) \nu (du) \nonumber \\
& & + \int_{\| u \| \le  1 } \left( \| x + c(x,u) \|^m - \|x\|^m- m  < x, c(x,u) > \|x\|^{m-2}\right) \nu (du).
\end{eqnarray}
We start with the term in the last line. By Taylor's formula, writing $h = c(x,u) = c_1 (x,u ) ,$ since $\| u \| \le 1,$ we certainly have that 
\begin{eqnarray*}
&& \Big| \;  \| x + c(x,u) \|^m - \|x\|^m- m  < x, c(x,u) > \|x\|^{m-2} \; \Big| \\
&&\le \frac12 \sup_{ y \in ]x, x+h[ } | < h , \nabla^2 V (y) h > |\\
&& \le \frac12 m \left[ 1 + |m-2| \right] \| h\|^2 \! \!  \sup_{ y \in ]x, x+h[ } \| y \|^{m - 2 } .
\end{eqnarray*}
Here, $]x, x+h[$ denotes the $d-$dimensional interval $ ]x_1, x_1 + h_1 [ \times \ldots \times  ]x_d, x_d + h_d [ .$ 

Applying condition 2. (a) to $ h = c_1(x,u),$ where $\| u \| \le 1,$ yields 
$$ \| h \|^2 \le \gamma^2  \|x\|^{2l} \|u\|^2 .$$
If $ m - 2 > 0,$ we choose $M_0\geq M $  such that 
$ ( 1 + \gamma M_0^{l - 1} )^{m -1}  \le 2$ (recall that $l < 1 $). Then we obtain
\begin{eqnarray*}
 \sup_{ y \in ]x, x+h[ } \| y \|^{m - 2 }  = \| x + h \|^{m-2} &\le&
  \|x\|^{m-2} \left[ 1 + \gamma \|x\|^{l  - 1} \right]^{m-2} \\
  & \le &  \|x\|^{m-2}  \left[ 1 + \gamma M_0^{l - 1 } \right]^{m-2} \\
  &\le & 2  \|x\|^{m-2}  .
\end{eqnarray*}  
If $ m < 2,$ we can proceed similarly, 
\begin{eqnarray*}
 \sup_{ y \in ]x, x+h[ } \| y \|^{m - 2 }   &\le&
  \|x\|^{m-2} \left[ 1 -  \gamma \|x\|^{l  - 1} \right]^{m-2} \\
  & \le &  \|x\|^{m-2}  \left[ 1 - \gamma M_0^{l - 1 } \right]^{m-2} \\
  &\le&2 \|x\|^{m-2} ,
\end{eqnarray*}   
where we choose $M_0$ such that  $ ( 1 - \gamma M_0^{l - 1} )^{m-2} \le 2 . $ 

As a consequence, for any $\|x\| \geq M_0,$ the last line of (\ref{eq:uff}) is bounded from above by 
\begin{equation}\label{eq:lastline}
 m \left(   [ 1 + | m-2|] \gamma^2 \int_{\| u \| \le 1 } \|u \|^2 \nu (du) \right) \|x\|^{ m-2 + 2 l} 
 \le C \; M_0^{l-1}  \|x\|^{ m-1 +  l} ,
\end{equation}  
since $ \| x\|^{l-1} \le M_0^{ l-1} .$ Here, $M_0^{l-1} \to 0$ as $M_0 \to \infty,$ and $C$ is some constant. Hence the last term of (\ref{eq:uff}) will be neglectable for our purposes.  

Concerning the first jump term in (\ref{eq:uff}) we proceed as Kulik (2009), proof of Proposition 4.1: For $\|u\| > 1,$ using condition 2. (b), we have 
\begin{multline*}
 \| x + c(x,u) \|^m - \| x\|^m \le \| x + c(x,u) \|^m - \| x + c_2 (x,u) \|^m \\
= \| x(u) + c_1 (x,u) \|^m - \|x(u) \|^m,\end{multline*}
where $x(u) = x + c_2 (x,u) ,$ 
and then, applying Taylor's formula, 
$$  \| x(u) + c_1 (x,u) \|^m - \|x(u) \|^m \le m  \| c_1 (x,u)\| \sup_{ y \in ] x(u) , x(u) + c_1 (x,u) [ } \| y \|^{m-1} .$$
Now, since $m \geq 1,$ we argue as before and obtain, using successively condition 2. (a) and 2. (b) and $\|u\| > 1,$ 
\begin{eqnarray*}
m  \| c_1 (x,u)\| \sup_{ y \in ] x(u) , x(u) + c_1 (x,u) [ } \| y \|^{m-1} &\le& 
 m \gamma \|x\|^l \|u\| \left( \| x(u) \| + \gamma \| x\|^l \|u\| \right)^{m-1} \\
 &\le & m \gamma \|x\|^l \|u\| \left(  \|x\|  + \gamma \| x\|^l \|u\| \right)^{m-1}\\
&\le & m \gamma \|x\|^{m- 1 +l} \|u\|^m \left( 1 + \gamma M_0^{l-1} \right)^{m-1} \\
&\le & 2 m \gamma \|x\|^{m- 1 +l} \|u\|^m ,
\end{eqnarray*}
by the choice of $M_0 .$ As a consequence, the first jump term in (\ref{eq:uff}) can be upper bounded as follows:
$$  \int_{\| u \| > 1 } \left( \| x + c(x,u) \|^m - \|x\|^m  \right) \nu (du)  \le m \|x\|^{m-1+l} \left[ 2 \gamma \int_{\| u\| > 1 } \| u\|^m \nu (du) \right] .$$
Collecting all the above results, we finally obtain that for all $\|x\| \geq M_0,$ 
$$ {\cal A} V(x) \le m \left( - r + 2 \gamma \int_{ \| u\| > 1 } \|u\|^m \nu (du) + C M_0^{l-1} \right) \frac{V(x)}{\|x\|^{1- l}}.$$
By condition 3., for $M_0 $ sufficiently large, $ - r + 2 \gamma \int_{ \| u\| > 1 } \|u\|^m \nu (du) + C M_0^{l-1}  < 0 $
eventually, 
and this implies the assertion. 
\end{proof}

\section{Appendix}
For the convenience of the reader we give in this section a Fuk-Nagaev inequality for sums of two-dependent identically distributed centred random variables admitting a moment of order $p.$ This inequality is the key tool to our deviation inequalities, and its proof can be found in the book of Rio (2000). 

Let $X_1, X_2, \ldots $ be centred identically distributed random variables which are two-dependent and such that $ E (|X_1|^p ) < \infty $ for some $ p \geq  2 .$ Put $ S_k := X_1 + \ldots + X_k .$ Then the following deviation inequality holds.

\begin{theo}\label{theo:fuknagaev}
For any $ \lambda > 0 $ we have that
$$ P \left( \sup_{ k \le n } |S_k| \geq 4 \lambda \right) \le C ( p) \left( \sigma^{ 2 (p-1)} \lambda^{ - 2 (p-1)} n^{p-1} + m_p \, n \; \lambda^{-p } \right) ,$$
where $ \sigma^2 = Var (X_1)$ and $m_p = E(|X_1|^p ).$  
\end{theo}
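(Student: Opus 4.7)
The plan is to reduce the statement to the classical Fuk--Nagaev inequality for sums of independent centred random variables with finite $p$-th moment, and then exploit the two-dependence of $(X_i)_i$ via an odd/even splitting trick, exactly in the spirit of equation (\ref{eq:xi1}) already used in the proof of Theorem \ref{theo:deviationnt}.

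First, I would split the sequence according to the parity of the index: set $Y_i := X_{2i-1}$ and $Z_i := X_{2i}$. By the two-dependence assumption, the sequences $(Y_i)_i$ and $(Z_i)_i$ each consist of independent identically distributed centred random variables with variance $\sigma^2$ and $p$-th absolute moment $m_p$. Denote $U_k := Y_1 + \ldots + Y_k$ and $V_k := Z_1 + \ldots + Z_k$. Then for every $k \le n$,
$$ |S_k| \le |U_{\lceil k/2 \rceil}| + |V_{\lfloor k/2 \rfloor}|, $$
hence
$$ \sup_{k \le n} |S_k| \le \sup_{k \le \lceil n/2 \rceil} |U_k| + \sup_{k \le \lfloor n/2 \rfloor} |V_k|. $$
In particular, on $\{\sup_{k \le n} |S_k| \ge 4\lambda\}$ at least one of the two suprema on the right is $\ge 2\lambda$, so by the union bound it suffices to control each of them separately.

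Second, I would invoke the classical Fuk--Nagaev inequality for maxima of partial sums of independent centred random variables with finite $p$-th moment, $p \ge 2$, as stated in Rio (2000). It asserts that for i.i.d.~centred variables $\xi_1, \ldots, \xi_m$ with variance $\sigma^2$ and $E|\xi_1|^p = m_p$, for every $y > 0$,
$$ P\!\left(\sup_{k\le m}|\xi_1+\ldots+\xi_k| \ge y\right) \le C(p) \left(\frac{m \sigma^2}{y^2}\right)^{p-1} + C(p)\, \frac{m\, m_p}{y^p}. $$
Applying this inequality to $(Y_i)$ and $(Z_i)$, with $m \le n$ and $y = 2\lambda$, and combining the two bounds via the union bound, yields after adjusting the constant $C(p)$ the announced estimate
$$ P\!\left(\sup_{k \le n}|S_k| \ge 4\lambda\right) \le C(p)\left(\sigma^{2(p-1)}\lambda^{-2(p-1)} n^{p-1} + m_p\, n\, \lambda^{-p}\right). $$

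The main obstacle is not the odd/even splitting, which is elementary once two-dependence is used, but rather the derivation of the classical Fuk--Nagaev inequality for independent sums in exactly the form stated above, with the two characteristic terms: a variance-type term of order $(n\sigma^2/\lambda^2)^{p-1}$ (reflecting a Bennett/Bernstein estimate on the truncated parts) and a heavy-tail term of order $n\,m_p\,\lambda^{-p}$ (arising from the contribution of large individual jumps). This independent-case version is obtained by combining Doob's maximal inequality with a truncation of the $\xi_i$'s at level of order $\lambda$ and a Bennett-type exponential bound on the truncated sums; this is carried out in detail in Chapter 6 of Rio (2000), so I would simply cite it rather than reproduce the computation.
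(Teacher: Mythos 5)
Your proof is correct, but it takes a genuinely different route from the paper's. The paper does not split the sequence: it invokes Rio (2000), Th\'eor\`eme 6.2, which is the Fuk--Nagaev inequality for strongly mixing sequences. There, one bounds the $\alpha$-mixing coefficients by $\alpha_0 = \tfrac12$, $\alpha_1 \le \tfrac12$, $\alpha_n = 0$ for $n \ge 2$, hence $\alpha^{-1}(u) \le 2\cdot 1_{[0,1/2)}(u)$, computes explicitly the quantile function $Q(u) \le m_p^{1/p} u^{-1/p}$ of $|X_1|$ and the function $H = R^{-1}$ appearing in Rio's bound, and then applies Rio's inequality (6.5) with the free parameter $r = 2(p-1)$; the term $4\bigl(1 + \lambda^2/(r s_n^2)\bigr)^{-r/2}$ becomes the $\sigma^{2(p-1)}\lambda^{-2(p-1)}n^{p-1}$ term and the tail integral yields $m_p\, n\, \lambda^{-p}$. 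Your odd/even split is the more elementary route: it exploits the fact that for an $m$-dependent sequence (in the standard sense that $\sigma(X_1,\dots,X_k)$ is independent of $\sigma(X_{k+2},X_{k+3},\dots)$, which is what holds in the application since $\bar\eta_k$ is $\mathcal F_{R_{k+1}}$-measurable and independent of $\mathcal F_{R_{k-1}}$) the subsequences $(X_{2i-1})_i$ and $(X_{2i})_i$ are each \emph{jointly} i.i.d.\ -- not merely pairwise independent -- and so a union bound plus the i.i.d.\ Fuk--Nagaev inequality suffices. This is exactly the parity trick the paper itself uses in the $1 < p < 2$ case of Theorem~\ref{theo:deviationnt}, so your version is arguably more uniform with the rest of the paper. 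What your approach buys is simplicity and independence of Rio's mixing machinery; what the paper's approach buys is that the argument would carry over verbatim to genuinely $\alpha$-mixing sequences where a finite-range split is not available. Your only implicit reliance is on the filtration-based reading of ``two-dependent'' needed for joint independence of the subsequences; you should state that explicitly, since pairwise independence at distance $\ge 2$ alone would not suffice -- but this is indeed the property that the paper verifies and uses in the application.
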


\begin{proof}
We use the Fuk-Nagaev inequality presented in Rio (2000), Th\'eor\`eme 6.2. First of all, since the variables are two-dependent, we certainly have the upper bound on the $\alpha -$mixing coefficients $\alpha_n = \sup_{k \geq n } \alpha ( \sigma (X_1) , \sigma (X_{k+1})) :$
$$ \alpha_0 = \frac12, \alpha_1 \le \frac12 , \alpha_n \equiv 0 \mbox{ for all } n \geq 2 .$$
Hence, using the notation (1.21) of Rio (2000), we can upper bound
$$ \alpha^{- 1 } (u) \le 2 \;  1_{[0, \frac12 [} (u) .$$ 
As a consequence the expression $ R(u) $ of Th\'eor\`eme 6.2 of Rio (2000) is given as 
$$ R(u) \le 2 Q(u) 1_{[0, \frac12 [} (u) \le 2 Q (u) , $$
where $ Q(u) = \inf \{ x : H_{X_1} (x) \le u \} $ is the quantile of $|X_1|$, $ H_{X_1} (t) = P ( | X_1| > t ).$
But since $X_1$ admits a $p-$th moment, we certainly have that
$$ H_{X_1} (t) \le m_p \; t^{- p },$$
by Markov's inequality (recall that $m_p = E( |X_1|^p )$). Since $Q$ is the generalised inverse function of $H_{X_1} ,$ this implies that 
$$ Q(u) \le m_p^{1/p } \; u^{ - 1/p} ,$$ 
and this in turn leads to 
$$ H(u) = R^{-1} (u) \le 2^p \; m_p u^{ -p}
 . $$
Now we can apply (6.5) of Rio (2000). First of all notice that by the two-dependency structure
$$ s_n^2 = \sum_{i= 1}^n \sum_{j = 1}^n |Cov (X_i, X_j) | \le 3 n \sigma^2 .$$ 
Thus we obtain, for any $r \geq 1,$  
\begin{eqnarray*}
P\left( \sup_{k \le n } |S_k| \geq 4 \lambda \right) &\le& 4 \left( 1 + \frac{ \lambda^2}{r s_n^2} \right)^{ - r/2} + 4 n \lambda^{-1} \int_0^{ H( \lambda/r)} Q(u) du \\
&\le &4 \left( 1 + \frac{ \lambda^2}{r s_n^2} \right)^{ - r/2} + 4 n \lambda^{-1} \int_0^{ H( \lambda/r)} m_p^{ 1/p } u^{- 1/p}  du \\
&=&4 \left( 1 + \frac{ \lambda^2}{r s_n^2} \right)^{ - r/2} + \frac{4 n \lambda^{-1}\,  m_p^{1/p} }{ ( 1 - \frac1p)} H(\lambda /r)^{ 1 - \frac1p } \\
&\le & 4 \left( 1 + \frac{ \lambda^2}{r s_n^2} \right)^{ - r/2} + C( p)  m_p \, n \, \lambda^{-p} \, r^{p-1} \\
&\le & 4 \left( 1 + \frac{ \lambda^2}{3 n r \sigma^2} \right)^{ - r/2} + C( p) \,  m_p \,  n \, \lambda^{-p} \, r^{p-1} .
\end{eqnarray*}
Here, $ C(p) $ is a constant depending only on $p.$ Now we choose $ r = 2 (p-1) .$ By assumption on $p,$ $r \geq 1 .$ Finally we get
$$ P\left( \sup_{k \le n } |S_k| \geq 4 \lambda \right) \le C ( p)   \left( \sigma^{ 2 (p-1)} \lambda^{ - 2 (p-1)} n^{p-1} +m_p \,  n \; \lambda^{-p } \right) .$$
\end{proof}

\end{document}